\theoremstyle{plain}
\newtheorem{thm}{Theorem}[section]
\newtheorem{thmA}{Theorem}
\newtheorem*{thm*}{Theorem}
\newtheorem{prop}[thm]{Proposition}
\newtheorem{lem}[thm]{Lemma}
\newtheorem*{cor*}{Corollary}
\theoremstyle{definition}
\newtheorem{defn}[thm]{Definition}
\theoremstyle{remark}
\newtheorem{remk}[thm]{Remark}
\newtheorem{remks}[thm]{Remarks}
\newtheorem{exm}[thm]{Example}
\newtheorem{exms}[thm]{Examples}
\numberwithin{equation}{section}
\newcommand{\thmref}{Theorem~\ref}
\newcommand{\propref}{Proposition~\ref}
\newcommand{\lemref}{Lemma~\ref}
\newcommand{\remref}{Remark~\ref}
\newcommand{\secref}{Section~\ref}
\newcommand{\sE}{{\mathcal E}}
\newcommand{\sF}{{\mathcal F}}
\newcommand{\sM}{{\mathcal M}}
\newcommand{\sO}{{\mathcal O}}
\newcommand{\A}{{\mathbb A}}
\newcommand{\C}{{\mathbb C}}
\renewcommand{\P}{{\mathbb P}}
\newcommand{\Q}{{\mathbb Q}}
\newcommand{\Z}{{\mathbb Z}}
\newcommand{\CH}{{\rm CH}}
\newcommand{\surj}{\twoheadrightarrow}
\newcommand{\inj}{\hookrightarrow}
\newcommand{\Spec}{{\rm Spec \,}}
\newcommand{\id}{{\operatorname{id}}}
\newcommand{\Sch}{{\operatorname{\mathbf{Sch}}}}
\newcommand{\op}{{\text{\rm op}}}
\renewcommand{\>}{\rangle}
\newcommand{\Sm}{{\mathbf{Sm}}}
\newcommand{\SmProj}{{\mathbf{SmProj}}}
\newcommand{\Ab}{{\mathbf{Ab}}}
\newcommand{\ds}{{/\kern-3pt/}}
\newcommand{\Proj}{{\operatorname{Proj}}}
\renewcommand{\dim}{\text{\rm dim}}
\newcommand{\tuborg}{\left\{\begin{array}{ll}}
\newcommand{\sluttuborg}{\end{array}\right.}
\newcommand{\pr}{{\rm pr}}
\newcounter{elno}
\newcounter{elno-abc}   
\newcounter{elno-abc-prime}
\newcommand{\TCH}{{\rm TCH}}
\newcommand{\Mor}{\textnormal{Mor}}
 \newcommand{\Corr}{\textnormal{Corr}}
 \newcommand{\Co}{{\bf Corr}}
 \newcommand{\ProjSm}{{\mathbf{ProjSm}}}
\begin{document}


\title{Motivic invariants of symmetric powers of curves}

\author{Rahul Gupta}

\address{Universität Regensburg}

\address{
 Fakultät für Mathematik,\\
Universität Regensburg,
 93049 Regensburg, Germany\\
 rahul.gupta@mathematik.uni-regensburg.de }






\maketitle

\begin{quote}\emph{Abstract.}  
We study the structure of various invariants of the symmetric powers of a smooth projective  curve in terms of that of the Jacobian of the curve. We generalise the results of  Macdonald and Collino to various invariants including the Weil-cohomology theory, the higher Chow groups, the additive higher Chow groups and the rational $K$-groups. 
\end{quote}

\vskip .5pc

\begin{quote} \emph{Keywords.}
Algebraic cycles, correspondences, Chow motives, Jacobian.
\end{quote}

\vskip .5pc

\begin{quote} \emph{Mathematics Subject Classification $[2020]$.}
Primary 14C25; Secondary 19E08, 14H99
\end{quote}

\setcounter{tocdepth}{1}
\tableofcontents

\section{Introduction} \label{sec:intro}
%
%

The aim of this paper is to study a certain class of invariants of the symmetric powers of 
a smooth projective curve. 
Note that the Jacobian of a smooth projective curve is an abelian variety and in general, we know much more about invariants of an abelian variety than those of a general scheme.  MacDonald \cite{Mac} studied the singular cohomology ring of the symmetric powers in terms of the cohomology ring of the Jacobian of the curve  and Collino \cite{Co} studied the same for the Chow ring. 
Our aim is to study similar kinds of relations for a class of invariants which includes
 the higher Chow groups and the rational $K$-theory. We also prove similar results for 
 non-homotopic invariants like  the additive higher Chow groups and 
 the higher Chow groups with 
 modulus. 
Moreover, we show that the results of Macdonald and Collino ``are motivic'', i.e., they can be proven by a uniform method. 
We also prove similar results for any Weil cohomology theory. We now introduce some notations and discuss  the problem in more detail. 

Let $C$ be a smooth projective curve over an algebraically closed field $k$ and let $C(n)$ denote the $n$-th symmetric power ${\rm Sym}^n(C)$ of $C$. Then $C(n)$ is an $n$-dimensional smooth scheme 
 and the closed points of  $C(n)$ can be identified with the set of effective degree-$n$ $0$-cycles on $C$, or equivalently with the set of unordered $n$-tuples of closed points of $C$. 
 Let $J(C)$ denote the Jacobian of $C$. Recall that the set of closed points of $J(C)$ is the set of the degree-zero $0$-cycles on $C$.  A closed point $p \in C$ defines the morphism 
 $\pi_n\colon C(n) \to J(C)$ and
 the closed embedding $\iota_{m,n}\colon C(m) \to C(n)$ for $m\leq n$, so that  
 $\pi_n((x_1, \dots, x_n)) = \sum_i [x_i] - n[p]$ 
 and 
 $\iota_{m,n} ((x_1, \dots, x_m)) = (x_1, \dots, x_m, p, \dots, p)$. 
It is known by \cite{Sh-63} that there exist coherent sheaves $\sF_n$ on $J(C)$ such that $C(n)= \Proj(\sF_n)$ as schemes over $J(C)$. 
 If $g$ denotes the genus of $C$ and $n \geq 2g-1$, then $\sF_n$ is locally free and hence $C(n)$ is a projective bundle over $J(C)$. In this case, we denote $\sF_n$ by $\sE_n$.

 Let $G \colon \ProjSm_k^{\op} \to {\bf Rings}$ be a functor on smooth projective schemes which  satisfies the projective bundle formula. For example, it can be the higher Chow groups $\CH^*(-, \bullet)$, the $K$-theory ring or the singular cohomology ring (in case $k=\C$).  Using 
  the projective bundle formula, it is easy to describe  $G(C(n))$ as an algebra over $G(J(C))$ for all $n\geq 2g-1$. 
 MacDonald \cite{Mac} studied the singular cohomology ring $H^*(C(n), \Z)$ of $C(n)$ as an algebra over $H^*(J(C), \Z)$ 
 for  $n < 2g-1$ while Collino \cite{Co} studied the Chow ring of $C(n)$ as an algebra over that of $J(C)$ for  $n< 2g-1$. MacDonald used Kunneth formula while Collino used the localization exact sequence for the Chow rings along with a moving lemma. 
 The main aim of this paper is to study the structure of $G(C(n))$
  as an algebra over $G(J(C))$ for $n< 2g-1$ and  for a general functor $G$ 
  which is close to a good cohomology theory. 
 For precise conditions on $G$, see Theorems~\ref{thm:thm-A}  and~\ref{thm:thm-B}
 in Sections~\ref{sec:thm-A} and~\ref{sec:thm-B}, respectively.
 
 We let 
 $u_i = c_i(\sE_n) \in \CH^i(J(C))$ denote the $i$-th Chern class of $\sE_n$ for $n \geq 2g-1$ and $0\leq i \leq n-g+1$. 
  By  \cite{Mat}, it follows that the elements $u_i$ do not depend on $n$ 
  (see \secref{sec:Symm-P-Corr-J} for details).
We let $\alpha = \sum_{0 \leq i \leq g} u_i z^{g-i} \in \CH^*(J(C))[z]$. 
 With these notations, we obtain the following applications of 
 Theorems~\ref{thm:thm-A} and~\ref{thm:thm-B}.

 \vskip .2cm
 
 \subsection{Classical Weil Cohomology ring}
 
 Let $k$ be an algebraically closed field and let $K$ be a field of characteristic zero. Let $H^*\colon  \ProjSm_k \to {\bf GrAlg}_K$ be a classical Weil cohomology theory (\cite{Kle68}).
 Recall that for $X \in \ProjSm_k$, there exists a cycle class map ${\rm cl}\colon  \CH^*(X) \to H^{2*}(X)$ which is a homomorphism of graded rings. As an application to  \thmref{thm:thm-A}, we prove the following.

 \begin{thm} \label{thm:WCT-Structure}
   Let $v_i = {\rm cl}(u_i) \in H^{2i}(J(C))$, $\beta = \sum_{0 \leq i \leq g} v_i z^{g-i} \in H^*(J(C))[z]$
and 
\[
I_n  = \begin{cases}  ((\beta): z^{2g-1-n})& \textnormal{ if } n <  2g-1,\\
(\beta \cdot z^{n-2g+1})&\textnormal{ if } n \geq  2g-1.
\end{cases}
\]
For every $n\geq 0$, we then have the short exact sequence 
\begin{equation} \label{eqn:thm-for-WCT*-1}
0 \to I_n \to H^*(J(C))[z] \xrightarrow{\psi_n} H^*(C(n)) \to  0,
\end{equation}
where $\psi_n\colon  H^*(J(C))[z] \to H^*(C(n))$ is a graded ring homomorphism (with ${\rm degree}(z) = 2$) such that  for $i \geq 1$ and $x \in H^s(J(C))$, we have  $\psi_n( x z^i)= \left(( \iota_{n-1,n})_* (1) \right)^i \cdot \pi_n^*(x) \in H^{s+2i}(C(n))$ if $n \geq 1$, and $0$ otherwise.  
\end{thm}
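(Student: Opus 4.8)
\smallskip\noindent\emph{Proof strategy.}\quad The plan is to deduce \thmref{thm:WCT-Structure} from \thmref{thm:thm-A}, so the remaining work is to (i) verify that a classical Weil cohomology theory satisfies the hypotheses of \thmref{thm:thm-A}, and (ii) make the resulting homomorphism $\psi_n$ and ideal $I_n$ explicit in the stated terms. For (i), recall from \cite{Kle68} that such an $H^*$ has the projective bundle formula, Gysin pushforwards along proper morphisms obeying the projection and self-intersection formulas, a cycle class map ${\rm cl}\colon\CH^*\to H^{2*}$ compatible with pullback, pushforward and cup products, and the long exact localization (Gysin) sequences relating a smooth variety, a smooth closed subvariety, and the open complement — exactly the input \thmref{thm:thm-A} uses. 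The one point deserving a word is that some of the complements occurring are not projective (e.g.\ $C(n)\setminus C(n-1)\cong\Sym^n(C\setminus\{p\})$); there one works with cohomology with supports, whose long exact sequences still hold, and computes the cohomology of the strata via the transfer isomorphism $H^*(\Sym^m Y)\xrightarrow{\ \sim\ }H^*(Y^m)^{S_m}$, valid since $\mathrm{char}(K)=0$. Throughout write $x_m:=(\iota_{m-1,m})_*(1)\in H^2(C(m))$ for $m\geq 1$ and $x_0:=0$.

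The heart of the matter is the stable range $n\geq 2g-1$, which is governed directly by the projective bundle formula. Here $C(n)=\Proj(\sE_n)$ is a $\P^{n-g}$-bundle over $J(C)$ with $\sE_n$ locally free of rank $n-g+1$, which we take normalized so that $\sO_{\sE_n}(1)=\sO_{C(n)}(C(n-1))$ (a valid normalization: $C(n-1)$ cuts out the hyperplane $|L(-p)|$ in each fibre $|L|$ of $\pi_n$, and two line bundles restricting to $\sO(1)$ on all fibres differ by a pullback from $J(C)$), so that $c_1(\sO_{\sE_n}(1))=x_n$. The projective bundle formula then yields \eqref{eqn:thm-for-WCT*-1} with $\psi_n$ exactly as stated and with $I_n$ generated by $\sum_{i=0}^{n-g+1}\pi_n^*(v_i)\,z^{\,n-g+1-i}$, where $v_i={\rm cl}(c_i(\sE_n))$; since $v_0=1$ and $c_i(\sE_n)\in\CH^i(J(C))=0$ for $i>g=\dim J(C)$, this generator equals $z^{\,n-2g+1}\beta$, giving the asserted $I_n$ and, in particular, $I_{2g-1}=(\beta)$. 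The independence of $u_i=c_i(\sE_n)$ on $n$ for $n\geq 2g-1$ I would get from the exact sequence $0\to\sE_n\to\sE_{n+1}\to\sO_{J(C)}\to 0$, obtained by pushing the inclusion $\sL\hookrightarrow\sL(p\times J(C))$ of a normalized Poincaré bundle of fibre degree $n$ forward along $C\times J(C)\to J(C)$ (using $R^1$-vanishing in fibre degree $\geq 2g-1$ and the triviality of $\sL|_{p\times J(C)}$ and of $\sO(p\times J(C))|_{p\times J(C)}$); it gives $c(\sE_{n+1})=c(\sE_n)$, and the identical argument over $\CH^*$ gives the independence of the $u_i$ themselves.

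For the unstable range $0\leq n<2g-1$ I would descend from $n=2g-1$. The $\psi_m$ are compatible with the closed immersions of the tower: $\iota_{m-1,m}^*\circ\psi_m=\psi_{m-1}$ — using $\pi_m\circ\iota_{m-1,m}=\pi_{m-1}$ (the base point being $p$ throughout) and $\iota_{m-1,m}^*(x_m)=x_{m-1}$ (the self-intersection computation $N_{C(m-1)/C(m)}\cong\sO_{C(m-1)}(C(m-2))$, cf.\ \cite{Mac}) — and, dually, $(\iota_{m-1,m})_*(\psi_{m-1}(f))=\psi_m(zf)$ for all $f$, by the projection formula. Granting, for $1\leq m\leq 2g-1$, that $(\iota_{m-1,m})_*$ is injective and $\iota_{m-1,m}^*$ is surjective — the two geometric facts that \thmref{thm:thm-A} supplies, and which in the Weil setting also follow from the Gysin sequence of $C(m-1)\subset C(m)\supset\Sym^m(C\setminus\{p\})$ together with the transfer computation above — the first compatibility makes $\psi_{m-1}$ surjective (as $\psi_m$ is, inductively) and the second gives $\ker\psi_{m-1}=(\ker\psi_m:z)$; descending from $\ker\psi_{2g-1}=(\beta)$ then yields $\ker\psi_n=((\beta):z^{\,2g-1-n})$. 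For $n=0$ the stated rule $z\mapsto 0$ for $\psi_0$ is just the instance $x_0=0$, consistent with the colon description of $I_0$. The main obstacle is precisely this unstable range — keeping hold of $H^*(C(m))$ once $\pi_m$ is no longer a projective bundle and special linear systems intervene — which \thmref{thm:thm-A} isolates and settles in a form independent of the particular theory $H^*$.
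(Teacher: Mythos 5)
Your overall architecture --- verify {\bf PBF}, {\bf Extn}, {\bf PF} and the Chern-class statements and then invoke \thmref{thm:thm-A} (via \thmref{cor:thm-A}) --- is the paper's route, and your treatment of the stable range $n\geq 2g-1$ (normalization $\sO_{\sE_n}(1)=\sO_{C(n)}(C(n-1))$, independence of the $c_i(\sE_n)$ via $0\to\sE_n\to\sE_{n+1}\to\sO_{J(C)}\to 0$) as well as the colon-ideal descent $\ker\psi_{m-1}=(\ker\psi_m:z)$ match the computation carried out inside \thmref{thm:thm-A}. The gap lies in how you secure the input that drives the unstable range: the injectivity of $(\iota_{m,n})_*$ and the surjectivity of $\iota_{m,n}^*$ on $H^*$. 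These are not facts that \thmref{thm:thm-A} ``supplies'' --- they are its hypothesis (3), so they must be verified for $H^*$. Your proposed verification, via localization/Gysin sequences for $C(m-1)\subset C(m)\supset \Sym^m(C\setminus\{p\})$, cohomology with supports, and the transfer isomorphism $H^*(\Sym^m Y)\cong H^*(Y^m)^{S_m}$, is not available for a classical Weil cohomology theory in the sense of \cite{Kle68}: such a theory is defined only on smooth projective varieties, and neither localization sequences nor cohomology of open (or non-projective, such as $\Sym^m(C\setminus\{p\})$) varieties are part of the axioms. Your opening assertion that these sequences are ``exactly the input \thmref{thm:thm-A} uses'' is likewise inaccurate: Theorem~A uses no such sequences.

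The paper closes exactly this point with correspondences. Collino's cycle-theoretic statement is upgraded to \lemref{lem:Col-lem*-2}, giving $\Gamma_{\iota_{m,n}}\circ\Gamma-\Delta=(\iota_{m-1,m}\times\id)_*(Y)$ in $\Corr^0(C(m),C(m))$; since a classical Weil cohomology factors through Chow motives (\cite[\S~1.3]{Kle68}), applying the resulting additive functor and inducting on $m$ yields the injectivity of the pushforward and the surjectivity of the pullback (\lemref{lem:inj}, \lemref{lem:surj}, \propref{cor:inj-surj-WCT}) without ever leaving $\ProjSm_k$; the only localization used occurs at the level of cycle groups on projective varieties, in diagram \eqref{eqn:pull-back*-1}, not at the level of the cohomology theory. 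For singular cohomology your Gysin/transfer route is fine (it is essentially MacDonald's), but it defeats the stated generality; as written, for an arbitrary classical Weil cohomology theory the key unstable-range hypothesis is left unproved. Replace that step by the correspondence argument (or any verification of hypothesis (3) valid for all classical Weil cohomologies) and your proof becomes complete and essentially coincides with the paper's.
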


 \vskip .2cm

 \subsection{Higher Chow groups}
Given a smooth quasi-projective scheme $X$ over a field and $r \geq 0$,  Bloch \cite{Bl-86} defined the higher Chow groups $\CH^*(X, r)$. The following result generalises Collino's work to the 
higher Chow groups. 

\begin{thm} \label{thm:HCG-Structure}
 Let $I_n$ denote the following ideal of the ring $\CH^*(J(C), \cdot)[z]$. 
\[
I_n  = \begin{cases}  ((\alpha): z^{2g-1-n})& \textnormal{ if } n <  2g-1,\\
(\alpha \cdot z^{n-2g+1}) &\textnormal{ if } n \geq  2g-1.
\end{cases}
\]
For every $n\geq 0$, we then have the short exact sequence 
\begin{equation} \label{eqn:thm-for-HCG*-1}
0 \to I_n \to \CH^*(J(C), \cdot)[z] \xrightarrow{\psi_n} \CH^*(C(n), \cdot) \to  0,
\end{equation}
where $\psi_n\colon  \CH^*(J(C), \cdot)[z] \to \CH^*(C(n), \cdot)$ is a bi-graded ring homomorphism such that  for $i \geq 1$ and $x \in \CH^s(J(C), r)$, we have  $\psi_n( x z^i)= \left(( \iota_{n-1,n})_* (1) \right)^i \cdot \pi_n^*(x) \in \CH^{s+i}(C(n), r)$ if $n \geq 1$, and $0$ otherwise.  
 \end{thm}

\vskip .2cm

\subsection{Additive higher Chow groups}

The theory of  the higher Chow groups is a motivic cohomology theory that describes 
the algebraic $K$-theory.  
 Bloch and Esnault in \cite{BE1} and \cite{BE2} defined the additive higher 0-cycles.
Their hope was that these additive 0-cycle groups would serve
as a guide in developing a theory of motivic cohomology with modulus 
 which could describe 
the algebraic $K$-theory
of non-reduced schemes.  The theory of the 
additive higher Chow groups  was 
further studied by R\"ulling \cite{R}, 
Krishna-Levine \cite{KLevine} and Park \cite{Park}. In the following theorem, we obtain the structure of 
this non-homotopic invariant of the symmetric powers.

\begin{thm} \label{thm:AHCG-Structure}
 For $ r , m \geq 0$, let $M=\TCH^*(J(C), r;m)[z]$
 and let $N_n$ denote
  the $\CH^*(J(C))[z]$-submodule of $M$ defined as  
\[
N_n  = \begin{cases}  ((\alpha): z^{2g-1-n}) M &\textnormal{ if } n <  2g-1,\\
(\alpha \cdot z^{n-2g+1})M  &\textnormal{ if } n \geq  2g-1.
\end{cases}
\]
For every $n\geq 0$, we then have the following short exact sequence of $\CH^*(J(C))$-modules.
\begin{equation} \label{eqn:AHCG*-1}
0 \to N_n \to M \xrightarrow{\psi_n} \TCH^*(C(n), r;m)\to  0,
\end{equation}
where for $i \geq 1$ and $x \in \TCH^*(J(C), r;m)$, we have $\psi_n( x z^i)= \left( (\iota_{n-1,n})_* (1) \right)^i \cdot \pi_n^*(x) \in \TCH^*(C(n), r;m)$ if $n \geq 1$, and $0$ otherwise.  
\end{thm}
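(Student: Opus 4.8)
The plan is to deduce the statement the same way Theorem~\ref{thm:HCG-Structure} is deduced, except that now $X\mapsto\TCH^*(X,r;m)$ is treated (for fixed $r,m$) as a functor with values in graded $\CH^*(X)$-modules rather than in graded rings; concretely, I would instantiate \thmref{thm:thm-B} by verifying that additive higher Chow groups satisfy its hypotheses, so that the data $u_i$, $\alpha$, $\pi_n^*$ and $(\iota_{n-1,n})_*$ are exactly those it takes as input. Three properties are needed. \emph{(i) Module structure and projection formula:} for $X$ smooth quasi-projective over $k$, $\TCH^*(X,r;m)$ is a graded $\CH^*(X)$-module, contravariantly functorial in $X$, with $f_*\bigl(f^*(a)\cdot\beta\bigr)=a\cdot f_*(\beta)$ for $f$ proper (the action of Chow groups on additive higher Chow groups; see \cite{KLevine, Park}). \emph{(ii) Module projective bundle formula:} for a bundle $E$ of rank $q+1$ on $X$, with $\pi\colon\P(E)\to X$ and $\xi=c_1(\sO_{\P(E)}(1))$,
\[
\TCH^*(\P(E),r;m)=\bigoplus_{i=0}^{q}\xi^{\,i}\cdot\pi^*\TCH^*(X,r;m),
\]
the action of $\xi$ being through the $\CH^*(\P(E))$-module structure; this follows from (i) and the projective bundle formula for $\CH^*$. \emph{(iii) Localization:} for a closed immersion $\iota\colon Z\hookrightarrow X$ of smooth $k$-schemes of pure codimension $c$, with open complement $j\colon U\hookrightarrow X$, a long exact sequence
\[
\cdots\to\TCH^{*-c}(Z,r;m)\xrightarrow{\ \iota_*\ }\TCH^*(X,r;m)\xrightarrow{\ j^*\ }\TCH^*(U,r;m)\to\TCH^{*-c}(Z,r-1;m)\to\cdots,
\]
compatible with $f^*$, $f_*$ and the $\CH^*$-module structures.

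With these in hand, the case $n\geq 2g-1$ is direct. Here $C(n)=\P(\sE_n)\xrightarrow{\pi_n}J(C)$ is a projective bundle of relative dimension $n-g$, so $\rank\sE_n=n-g+1$; as recalled in the statement, $u_i=c_i(\sE_n)$ is independent of $n$ (this is the Schwarzenberger computation) and vanishes for $i>g$ since $\rank\sE_{2g-1}=g$, whence the Chern polynomial $\sum_i c_i(\sE_n)z^{\,n-g+1-i}$ equals $\alpha\cdot z^{\,n-2g+1}$. Using moreover the classical identification of the ``add-$p$'' divisor $\iota_{n-1,n}\colon C(n-1)\hookrightarrow C(n)$ with the relative hyperplane, $(\iota_{n-1,n})_*(1)=\xi$, the module projective bundle formula (ii) becomes precisely the short exact sequence \eqref{eqn:AHCG*-1} with $N_n=(\alpha z^{\,n-2g+1})M$ and $\psi_n$ as stated.

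For $n<2g-1$ I would induct downward along the tower $C(n)\hookrightarrow C(n+1)\hookrightarrow\cdots\hookrightarrow C(2g-1)$, in which every inclusion is an ``add-$p$'' smooth Cartier divisor and the classes $\xi=(\iota_{m-1,m})_*(1)$ are compatible under restriction. For one step, the localization sequence (iii) for $C(n)\hookrightarrow C(n+1)$, combined with the projection-formula identity $\iota_{n,n+1,*}\,\iota_{n,n+1}^*=\xi\cdot(-)$, shows --- once one knows that $\iota_{n,n+1}^*$ is surjective and $(\iota_{n,n+1})_*$ is injective on $\TCH^*$ --- that $\ker(\iota_{n,n+1}^*)=\bigl(0:_{\TCH^*(C(n+1),r;m)}\xi\bigr)$, so $\TCH^*(C(n),r;m)\cong\TCH^*(C(n+1),r;m)/(0:\xi)$. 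Iterating $2g-1-n$ times starting from $\TCH^*(C(2g-1),r;m)=M/(\alpha)M$, and identifying the resulting $\CH^*(J(C))[z]$-submodule of $M$ with $\bigl((\alpha):z^{\,2g-1-n}\bigr)M$ by a direct computation using the monicity of $\alpha$ in $z$, yields \eqref{eqn:AHCG*-1}; the compatibility of the $\xi$'s and of $\pi_n^*$ along the tower pins down the formula for $\psi_n$ in this range as well.

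The step I expect to be the main obstacle is the degeneration of the localization sequences into short exact sequences for $n<2g-1$ --- equivalently, the injectivity of $(\iota_{n,n+1})_*$ and the surjectivity of $\iota_{n,n+1}^*$. For an $\A^1$-invariant theory one would trivialize the open complements $C(n+1)\setminus C(n)$ up to homotopy and invoke homotopy invariance, but additive higher Chow groups are non-homotopic, so this route is closed; the argument must instead go through the $\CH^*$-module structure and a moving-lemma input of the type Collino used for Chow rings --- which is exactly what \thmref{thm:thm-B} is built to furnish. Secondary points needing care: establishing (iii) (and checking (i), (ii)) in the precise functoriality required, with the modulus parameter $m$ carried along; the identification $(\iota_{n-1,n})_*(1)=c_1(\sO_{\P(\sE_n)}(1))$ for the normalization of $\sE_n$ in use; and the compatibility of the Gysin classes $(\iota_{n-1,n})_*(1)$ along the whole tower, in particular at the transition $n=2g-1$ where the projective-bundle description starts.
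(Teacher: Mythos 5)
Your overall route --- verify that $G=\TCH^*(-,r;m)$ satisfies the hypotheses of \thmref{thm:thm-B} and then quote that theorem --- is the paper's proof: it checks {\bf P1--P4} by citing \cite[\S~3.3, Corollary~5.4, Lemma~3.8, Theorem~4.10]{KLevine}, gets {\bf P5} from \cite[Theorem~5.6]{KLevine} converted to the exact-sequence form exactly as in \lemref{lem:PBF-HCG-Ring}, and then invokes \thmref{thm:thm-B}. However, two points in your verification are genuinely off. First, your item (ii) does not ``follow from (i) and the projective bundle formula for $\CH^*$'': for an arbitrary $\CH^*$-module-valued presheaf there is no such implication, and for additive higher Chow groups the projective bundle formula is a substantive theorem of Krishna--Levine (\cite[Theorem~5.6]{KLevine}) that must be cited and then massaged into the form {\bf P5}. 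Second, your item (iii), a localization long exact sequence for $\TCH^*(-,r;m)$ attached to an arbitrary smooth closed immersion, is not available: no moving lemma or localization theorem of that strength is proved in \cite{KLevine} or \cite{Park}, and this is precisely the sort of statement that the non-homotopy-invariant setting obstructs. Any step of your downward induction that genuinely relies on (iii) is therefore unsupported.

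Fortunately the gap closes inside your own sketch, and this is exactly what \thmref{thm:thm-B} is engineered to do: no localization for $G$ is ever used. The injectivity of $(\iota_{n,n+1})_*$ and surjectivity of $\iota_{n,n+1}^*$ come from \lemref{lem:inj} and \lemref{lem:surj}, which need only the extension of $G$ to $\sM_k^{0,\dim}$ via \lemref{lem:Ext*-1} (note that this extension also requires flat base change {\bf P2}, which your list omits) together with the correspondence identity of \lemref{lem:Col-lem*-2}; there, localization is applied only to ordinary cycle groups on $C(m)\times C(n)$, never to $\TCH$. Once injectivity and surjectivity are in hand, your identification $\ker(\iota_{n,n+1}^*)=(0:\xi)$ follows from the projection formula and $(\iota_{m,n})_*(1)=v_n^{n-m}$ alone, so item (iii) can simply be deleted; with that correction, and the proper citation for (ii), your argument coincides with the paper's.
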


\vskip .2cm

\subsection{Rational $K$-theory} 
 For $X \in \ProjSm_k$, let $K(X)$ denote the algebraic $K$-theory spectra and let $K_r(X)$ denote the $r$-th stable homotopy group of $K(X)$. We prove the following result for the groups $K_r(X)_{\Q}$. Recall  that $K_r(X)_{\Q}$ is a module over the ring $K_0(X)_{\Q} \cong \CH^*(X)_{\Q}$. 
\begin{thm} \label{thm:K-thy-Structure}
 Let 
  $M=K_r(J(C))_{\Q}[z]$ and let 
   $N_n$ denote the $\CH^*(J(C))_{\Q}$-submodule of $M$ defined as 
\[
N_n  = \begin{cases}  ((\alpha): z^{2g-1-n}) M &\textnormal{ if } n <  2g-1,\\
(\alpha \cdot z^{n-2g+1})M  &\textnormal{ if } n \geq  2g-1.
\end{cases}
\]
For every $n\geq 0$, we then have the following short exact sequence of $\CH^*(J(C))_{\Q}$-modules.
\begin{equation} \label{eqn:K-thy*-1}
0 \to N_n \to K_r(J(C))_{\Q}[z]  \xrightarrow{\psi_n} K_r(C(n))_{\Q} \to  0,
\end{equation}
where for $i \geq 1$ and $x \in K_r(J(C))_{\Q}$, we have $\psi_n( x z^i)= \left( (\iota_{n-1,n})_* (1) \right)^i \cdot \pi_n^*(x) \in K_r(C(n))_{\Q}$ if $n \geq 1$, and $0$ otherwise.  
\end{thm}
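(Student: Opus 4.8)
The plan is to obtain this as an instance of the general module-level statement \thmref{thm:thm-B}, applied to the coefficient functor $K_r(-)_{\Q}$ over the ring functor $\CH^*(-)_{\Q}$; a parallel route reduces it directly to \thmref{thm:HCG-Structure}. The structural facts about rational $K$-theory on $\ProjSm_k$ that one needs are the following. By Grothendieck--Riemann--Roch (equivalently the $\Q$-linear Chern character together with the Adams eigenspace decomposition) there is a ring isomorphism $K_0(X)_{\Q}\cong\CH^*(X)_{\Q}$, and each $K_r(X)_{\Q}$ is naturally a $\CH^*(X)_{\Q}$-module; pullback along a morphism of smooth projective $k$-schemes is a homomorphism of such modules, and Gysin pushforward along a closed immersion of smooth projective schemes is $\CH^*$-linear via the projection formula. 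Quillen's projective bundle theorem upgrades, after $\otimes\,\Q$, to the module-level projective bundle formula: for a rank $r'$ bundle $\sE$ on $X$ there is an isomorphism $K_r(\P(\sE))_{\Q}\cong K_r(X)_{\Q}\otimes_{\CH^*(X)_{\Q}}\CH^*(\P(\sE))_{\Q}$ of $\CH^*(\P(\sE))_{\Q}$-modules. Finally, Quillen's localization theorem together with homotopy invariance $K_r(E)_{\Q}\cong K_r(X)_{\Q}$ for a vector-bundle (or affine-bundle) torsor $E\to X$ supplies the long exact sequences used in the induction, and these are sequences of $\CH^*(-)_{\Q}$-modules, again by Grothendieck--Riemann--Roch. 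Thus $K_r(-)_{\Q}$ satisfies precisely the hypotheses imposed on the coefficient module in \thmref{thm:thm-B}, with ambient ring functor the rational Chow ring $\CH^*(-)_{\Q}$, which satisfies the hypotheses of \thmref{thm:thm-A} (this being the rational form of the input already used for \thmref{thm:HCG-Structure}).

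Granting this, \thmref{thm:thm-B} applies verbatim. For $n\ge 2g-1$ the sequence \eqref{eqn:K-thy*-1} is the module-level projective bundle formula for $\pi_n\colon C(n)=\P(\sE_n)\to J(C)$, with $N_n=(\alpha\cdot z^{n-2g+1})M$ the image of the Grothendieck relation $\sum_{0\le i\le g}(-1)^i\pi_n^*(u_i)\,\xi^{\,n-g+1-i}=0$ after the substitution $\xi\leftrightarrow z$; for $n<2g-1$ one descends the tower of closed immersions $\iota_{m,m+1}\colon C(m)\hookrightarrow C(m+1)$ exactly as in the proof of \thmref{thm:thm-B}, the successive localization sequences yielding the colon submodule $((\alpha):z^{2g-1-n})M$. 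The class playing the role of $z$ is $(\iota_{n-1,n})_*(1)\in K_0(C(n))_{\Q}\cong\CH^*(C(n))_{\Q}$, which equals the fundamental class of the divisor $C(n-1)$ up to higher-codimension Todd corrections; since \thmref{thm:thm-B} only uses that this class, together with the module map $\pi_n^*$, satisfies the relevant relation, the stated formula for $\psi_n$ follows.

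Alternatively one can bypass \thmref{thm:thm-B}: the $\Q$-linear decomposition $K_r(X)_{\Q}\cong\bigoplus_{j\ge 0}\CH^j(X,r)_{\Q}$ is compatible with products, pullbacks and Gysin maps, so summing the short exact sequence of \thmref{thm:HCG-Structure} over all codimensions $j$ -- equivalently, extracting its component in simplicial degree $r$ -- produces \eqref{eqn:K-thy*-1}, provided one identifies the simplicial-degree-$r$ part of the bigraded ideal $I_n$ of \thmref{thm:HCG-Structure} with the submodule $N_n=((\alpha):z^{2g-1-n})M$.

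The main obstacle, on either route, is not the abstract machinery but this last point of bookkeeping: one must verify that every connecting homomorphism in the localization sequences is strictly $\CH^*(-)_{\Q}$-linear -- for which Grothendieck--Riemann--Roch and the projection formula, rather than a naive cycle-theoretic argument, are essential -- and that the colon ideal computed in the full bigraded higher-Chow ring agrees with the colon ideal of $\CH^*(J(C))_{\Q}[z]$ extended to $M$. The latter uses that $\alpha$ and $z$ lie in simplicial degree $0$ and, at bottom, that the inductive construction underlying \thmref{thm:thm-B} produces $N_n$ in exactly this form.
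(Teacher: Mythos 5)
Your main route is essentially the paper's: one checks that $K_r(-)_{\Q}$ satisfies the axioms {\bf P1-P4} of \secref{sec:Mot-Func} (proper push-forwards, flat base change, the $\CH^*(-)_{\Q}$-module structure coming from $K_0(-)_{\Q}\cong\CH^*(-)_{\Q}$, and the projection formula, all supplied by \cite{TT}) together with the module-level projective bundle formula {\bf P5} (Thomason--Trobaugh's projective bundle theorem, rewritten rationally in terms of $c_1(\sO_{\P(\sE)}(1))$), and then quotes \thmref{thm:thm-B}; your alternative route through $K_r(X)_{\Q}\cong\oplus_j\CH^j(X,r)_{\Q}$ is exactly the reduction to \thmref{thm:HCG-Structure} that the paper itself records in a remark but deliberately avoids so that the relative $K$-theory variant remains available. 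The one substantive inaccuracy is your claim that Quillen localization plus homotopy invariance ``supplies the long exact sequences used in the induction'' and that the ``main obstacle'' is the $\CH^*(-)_{\Q}$-linearity of the connecting homomorphisms: \thmref{thm:thm-B} has no such hypothesis and its proof uses no localization sequence for the coefficient functor at all. The range $n<2g-1$ is handled by the surjectivity of $\iota_{n,2g-1}^*$ and the injectivity of $(\iota_{n,2g-1})_*$, which come from the correspondence identities on symmetric powers (\lemref{lem:Col-lem*-2}, fed into \lemref{lem:inj} and \lemref{lem:surj} via the extension provided by \thmref{lem:Ext*-1}); the only localization argument anywhere is at the level of cycle groups inside \lemref{lem:Col-lem*-2}, not for $G=K_r(-)_{\Q}$. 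So the difficulty you single out dissolves in this framework, and reintroducing a Collino-style localization induction for $K$-theory of the non-projective opens $C(n)\setminus C(n-1)$ would be both unnecessary and harder than the argument you are invoking.
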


 \vskip .2cm
 
 \subsection{Outline of the paper} \label{sec:Outline}
 
In  \secref{sec:Prel}, we recall the definitions of the  Chow groups and the  correspondences, 
which we use as our main tools. We also recall  the Chow motives and give sufficient conditions on a functor, defined on smooth projective schemes, such that it extends to the Chow motives of degrees $0$ and top. In \secref{sec:Symm-P-Corr}, we study the correspondences on the symmetric powers of a curve. We define particular correspondences and study their composition. Using these results about correspondences, we then generalise \cite[Theorems~1 and~2]{Co} to any functor from the category of the Chow motives of degrees $0$ and top to the category of abelian groups. 
 We prove a very general theorem in \secref{sec:thm-A} regarding the structure of various invariants of a certain sequence of closed embeddings over a fixed base in terms of the invariants of the base scheme. For the precise statement see \thmref{thm:thm-A}. We then specialise to the study of invariants of the symmetric powers of a curve in terms of the invariants of the Jacobian of the curve. In  \secref{sec:Symm-P-Corr-J}, we consider the invariants which have a ring structure. In  \secref{sec:thm-B}, we focus on the invariants which are modules over the Chow ring and prove results similar to  \secref{sec:Symm-P-Corr-J} for these invariants. In \secref{sec:App}, as applications of Theorems \ref{thm:thm-A} and \ref{thm:thm-B}, we prove our Theorems~\ref{thm:WCT-Structure}--\ref{thm:K-thy-Structure}.

\vskip .2cm

\subsection{Notations}\label{sec:Not}
 
 Given a field $k$, we let $\Sch_k$ denote the category of separated
finite type schemes over $k$ and let $\Sm_k$ denote the full subcategory of 
smooth schemes over $k$. We let $\ProjSm_k$ denote the full subcategory of $\Sm_k$ generated by smooth projective schemes over $k$. 
For a scheme $X$, we let $\Delta_X$ denote both the 
diagonal embedding $X \inj X \times X$ and its image. 

 For a smooth projective scheme $C$ over an algebraically closed field $k$ and $n\geq 0$, we let $C[n]$ denote the product of $n$-copies of $C$ and let $C(n)$ denote the $n$-th symmetric product of $C$. The quotient map is denoted by $\rho_n\colon  C[n] \to C(n)$. Note that $\rho_n$ is a finite separable morphism of degree $n!$. 
  For closed points $x_1, \dots, x_n\in C$, we denote the corresponding closed point in $C[n]$ by $[x_1, \dots, x_n]$ and denote its image in $C(n)$   by $(x_1, \dots, x_n)$.

We let ${\bf Rings}$ denote the category of associative rings with $1$ and let ${\bf CRings}$ denote  the category of commutative rings with $1$. Moreover, we let ${\bf GrRings}$ (resp. 
${\bf GrCRings}$) denote the category of graded commutative rings (resp. commutative rings that are graded). By graded commutative ring $R$, we mean a graded ring $R = \oplus_i R_i$ such that  if $x \in R_i$ and $y\in R_j$, then $xy = (-1)^{i+j} y x$. For a graded $R$-module $M$ and $n \in \Z$, we write the shifted graded module by $M[n]$, where $M[n]_i = M_{i+n}$.  For a commutative 
ring $R$, we let ${\bf GrAlg}_R$ denote the category of graded $R$-algebras.
%
 
\vskip .2cm

\section{Preliminaries } \label{sec:Prel}

In this section, we recall the definitions of the correspondences and the 
Chow motives. We prove some basic lemmas about correspondences. 
We also give a set of conditions on a functor from smooth projective  schemes to abelian groups which are sufficient to lift the functor to the Chow motives.

We begin with recalling the definition of the Chow group of algebraic cycles on schemes.

\vskip .2cm

\subsection{Chow groups} \label{sec:Prel-CH}

Let $k$ be a field and let $X \in \Sch_k$. For $i \in \Z_{\geq 0 }$, let $Z^i(X)$ denote the free abelian group generated on the set of irreducible closed subsets of $X$ of codimension $i$ and let $R^i(X) \subset Z^i(X)$ be the subgroup generated by ${\rm div}(g)$, where $g \in k(Y)^{\times}$ for an integral subvariety $Y$ of $X$ of codimension $i-1$. The Chow group of codimension $i$-cycles is defined to be the quotient $\CH^i(X):= Z^i(X)/R^i(X)$. For more details and basic properties of these groups, we refer the reader to \cite{Fulton}. 
It is worth mentioning that for a smooth scheme $X$, the direct sum $\CH^*(X) = \oplus_{i} \CH^i(X)$ forms a ring, known as the Chow ring of $X$ and for a morphism $f\colon X \to Y$, where $Y$ is smooth, we have a pull-back map $f^*\colon  \CH^*(Y) \to \CH^*(X)$.

Let $f\colon X \to Y$ be a morphism between smooth projective schemes and let $Z^i_f(Y)$ be the subgroup of $Z^i(Y)$ generated by irreducible closed subsets $W \in Z^i(Y)$ such that each irreducible component of the closed  subscheme $f^{-1}(W) \subset X$  has codimension at least  $i$. Let $R^i_f(Y) = Z^i_f(Y) \cap R^i(Y)$. 
By Fulton's construction of the pull-back map (for example, see \cite[\S~8.1, before Definition~8.1.1]{Fulton}), 
it follows that the pull-back map $f^*\colon  Z^*_f(Y) \to \CH^*(X)$ factors through $Z^*(X)$ and for $W\in Z^*_f(Y)$, the pull-back cycle $f^*(W) \in Z^*(X)$ is supported on $f^{-1}(W)$. 
We therefore have the following commutative diagram.
\begin{equation}\label{eqn:CG*-1}
\xymatrix@C.8pc{
Z^*_f(Y) \ar[r]^{f^*} \ar[d] & Z^*(X) \ar@{->>}[d]\\
\CH^*(Y) \ar[r]^{f^*} & \CH^*(X).}
\end{equation}

\vskip .2cm

 \subsection{Correspondences} \label{sec:Prel-Corr}
 
 For $X, Y \in \Sch_k$ and $i \in \Z$, we define the group of degree $i$ correspondences from $X$ to $Y$ as
 ${\rm Corr}^i(X, Y) = \oplus_{j} \CH^{d_j +i} (X_j \times Y)$, where $X = \cup_j X_j$ is the irreducible decomposition of $X$ and $d_j = \dim(X_j)$. 
 We denote the direct sum $\oplus_i \Corr^i(X,Y)$ by $\Corr(X, Y)$. 
These groups satisfy the following properties. 
 
 \begin{enumerate}
 \item 
  If $X$ is an irreducible scheme of dimension $d$, then ${\rm Corr}^i(X, Y) = \CH^{d+i}(X \times Y)$. 
  
  \item
For a correspondence $\alpha \in \Corr^i(X, Y)$ of irreducible schemes $X$ and $Y$, we have a dual correspondence $\alpha^{\op} \in \Corr^{d_X - d_Y +i}(Y, X)$ which is obtained by applying the involution $X \times Y \xrightarrow{\cong} Y \times X$ to $\alpha$. Here $d_X$ and $d_Y$ denote  the dimensions of $X$ and $Y$, respectively.

  \item Given a morphism $f\colon  Y \to X$ of irreducible schemes, the graph $\Gamma_f \subset X \times Y$ belongs to $\Corr^0(X, Y)$. We have its dual correspondence $\Gamma_f^{\op} \in \Corr^{d_X - d_Y} (Y, X)$.
     
  \item  For smooth projective  schemes $X$, $Y$ and $Z$, we let $\alpha \in \Corr^i(X, Y)$ and $\beta \in \Corr^j(Y, Z)$. Without loss of generality, we can assume that $X$ and $Y$ are irreducible. We  define the composition $\beta \circ \alpha  \in \Corr^{i+j}(X, Z)$ as follows. 
  \begin{equation} \label{eqn:comp*-0}
  \beta \circ \alpha  = p_{13*} (p_{12}^* \alpha \cdot p_{23}^* \beta),
  \end{equation}
  where $p_{12}\colon X \times Y \times Z \to X \times Y$, 
   $p_{13}$ and $p_{23}$ are respective projections.  The product, on the right hand side of \eqref{eqn:comp*-0}, is taken in the Chow ring $\CH^*(X \times Y \times Z)$ and the push-forward $p_{13*}$ exists because  $Y$ is a projective scheme. 
 \end{enumerate}

We define the category $\Co_k$ whose objects are smooth projective schemes over $k$ and the morphism set between two smooth  projective schemes $X$ and $Y$ is given by $\Mor_{\Co_k}(X, Y) =\Corr(X,Y)$. The composition of the morphisms in $\Co_k$ is determined by \eqref{eqn:comp*-0}. We have a contravariant functor $\ProjSm_k \to \Co_k$ which is 
 identity on the objects and maps a morphism $f\colon  Y \to X$ to $\Gamma_f \in \Corr(X, Y) = \Mor_{\Co_k}(X, Y)$. The following lemma yields the expression for the compositions with the graph of a morphism.

\begin{lem}$($\cite[Proposition~62.4]{EKM08}$)$ \label{lem:Corr*-1} Let $X, Y, Z, W \in \ProjSm_k$ with $\alpha \in \Corr(Z, X)$ and $ \beta \in \Corr(Y, W)$. Let $f\colon  Y \to X$ be a morphism and let $\Gamma_f \in \Corr(X, Y)$ be the correspondence associated to $f$. 
We then have
\begin{eqnarray}
\label{eqn:Corr*-2} \Gamma_f \circ \alpha &=& (\id \times f)^* \alpha \in \Corr(Z, Y), \\
\label{eqn:Corr*-3} \beta  \circ \Gamma_f  &=& (f \times \id)_* \beta \in \Corr(X, W).
\end{eqnarray}
\end{lem}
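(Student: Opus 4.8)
The plan is to prove the two formulas
\eqref{eqn:Corr*-2} and \eqref{eqn:Corr*-3} directly from the definition of
composition in \eqref{eqn:comp*-0}, reducing everything to the projection
formula and base-change for refined intersection products in the Chow ring. I
may assume $X,Y,Z,W$ irreducible (the general case follows by decomposing each
scheme into its irreducible components and using that both composition and the
pullback/pushforward maps in question respect the direct sum decomposition of
$\Corr$). Write $d_X,d_Y,\dots$ for the dimensions. The graph correspondence
$\Gamma_f\subset X\times Y$ is the image of the closed embedding
$(f,\id)\colon Y\hookrightarrow X\times Y$, so $\Gamma_f=(f\times\id)_*[Y]$ as a
cycle class on $X\times Y$, a fact I will use repeatedly.

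For \eqref{eqn:Corr*-2}: by definition $\Gamma_f\circ\alpha
= p_{13,*}\bigl(p_{12}^*\alpha \cdot p_{23}^*\Gamma_f\bigr)$ on $Z\times X\times
Y$, where $p_{ij}$ are the projections. The key observation is that
$p_{23}^*\Gamma_f$ is the class of the closed subscheme $Z\times Y
\hookrightarrow Z\times X\times Y$ given by $\id_Z\times(f,\id_Y)$; concretely
$p_{23}^{-1}(\Gamma_f)$ is identified with $Z\times Y$. Then I apply the
projection formula along this regular embedding: intersecting $p_{12}^*\alpha$
with $[Z\times Y]$ and pushing forward via $p_{13}$ amounts to first restricting
$p_{12}^*\alpha$ to $Z\times Y$ and then pushing to $Z\times Y$ (which is an
isomorphism onto the image under $p_{13}$). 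The restriction of $p_{12}^*\alpha$
to the graph locus $Z\times Y$ is exactly $(\id_Z\times f)^*\alpha$, since the
composite $Z\times Y\hookrightarrow Z\times X\times Y\xrightarrow{p_{12}}Z\times
X$ equals $\id_Z\times f$. This yields $\Gamma_f\circ\alpha=(\id\times
f)^*\alpha$. The formula \eqref{eqn:Corr*-3} is proved by the mirror-image
argument on $X\times Y\times W$: now $p_{12}^*\Gamma_f$ is the class of the
subscheme $X\times W$ (via $(f,\id_X)\times\id_W$ — more precisely the locus
$\{(f(y),y,w)\}$), the projection formula lets one push forward along
$p_{13}$ after restricting $p_{23}^*\beta$, and one checks that this restriction
followed by the identification with $X\times W$ gives precisely $(f\times\id)_*\beta$.
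Degree bookkeeping ($\Gamma_f\in\Corr^0$, $\Gamma_f^{\op}\in\Corr^{d_X-d_Y}$,
etc.) is routine and I will only state the shifts.

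The one genuine subtlety — and the step I expect to cost the most care — is
that refined intersection products and the projection formula as stated in
\cite[\S8.1]{Fulton} apply to cycles on schemes, whereas composition of
correspondences lives on Chow groups; one must check that intersecting with the
(non-proper-looking but actually nicely-embedded) graph locus is the refined
Gysin pullback along the regular embedding $(f,\id)$, and that $f$ being a
morphism of \emph{smooth projective} schemes makes this embedding a regular
closed immersion so the refined pullback agrees with the flat/l.c.i. pullback
$(\id\times f)^*$ already used in \eqref{eqn:CG*-1}. Once this identification is
in place, both identities are a one-line consequence of the projection formula.
Since the statement is quoted verbatim from \cite[Proposition~62.4]{EKM08}, I
would in fact simply cite that reference and include the two-line derivation
above as a reminder, rather than reproving the intersection-theory foundations.
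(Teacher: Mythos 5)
Your proposal is correct, and it ends up taking essentially the same route as the paper: the paper gives no proof of this lemma at all, relying, exactly as you ultimately propose, on the citation to \cite[Proposition~62.4]{EKM08}. Your reminder sketch is the standard argument behind that reference and is sound — identify $p_{23}^*\Gamma_f$ (resp.\ $p_{12}^*\Gamma_f$) with the pushforward of the fundamental class of $Z\times Y$ (resp.\ $Y\times W$) along the regular embedding $\id_Z\times(f,\id_Y)$ (resp.\ $(f,\id_Y)\times\id_W$), apply the projection formula, and note that $p_{13}$ restricts on this locus to the identity (resp.\ to $f\times\id_W$) while $p_{12}$ (resp.\ $p_{23}$) restricts to $\id_Z\times f$ (resp.\ the identity) — modulo the harmless notational slip that $\Gamma_f$ is $(f,\id)_*[Y]$, not $(f\times\id)_*[Y]$.
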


\vskip .2cm

\subsection{Chow Motives} \label{sec:Prel-CM}

Let $k$ be a field and let $\Lambda$ be an abelian group. The $\Lambda$-linear category $\sM_k(\Lambda)$ 
of the 
Chow motives over $k$, with coefficients in $\Lambda$, is defined as follows: an object of $\sM_k(\Lambda)$ is a triple $(X, p, m)$, where $X$ is a smooth projective scheme over $k$, $m \in \Z$ and $p \in \Corr^0(X, X)_{\Lambda}$ which  satisfies $p \circ p = p$. 
For the Chow motives $(X, p, m)$ and $(Y, q, n)$, the morphism set is given by 
\[
\Mor_{\sM_k(\Lambda)}((X, p, m)(Y, q, n)) = q \circ \Corr^{n-m}(X, Y)_{\Lambda} \circ p \subset \Corr^{n-m}(X, Y)_{\Lambda}.
\]
Composition law is given by the composition of the correspondences. We have a functor $h\colon  \ProjSm_k^{\op} \to \sM_k(\Lambda)$ so that $h(X) = (X, 1, 0)$ and 
$h(f) = \Gamma_f$. For a Chow motive $M=(X, p, m)$ and $i \in \Z$, we let
 $M(i)= (X, p, m +i)$. For $X \in \ProjSm_k$ and a 
  vector bundle $\pi\colon \sE \to X$  of rank $e+1$, we let 
 \[
 c_i = c_1(\pr_2^* \sO_{\P(\sE)}(1))^i \cap \Gamma_{\pi} \in \Corr^{i}(X, \P(\sE)) =  
\Mor_{\sM_k}(h(X)(-i), h(\P(\sE))).
\]
By \cite[Theorem~63.10]{EKM08}, it follows that the map
\begin{equation}\label{eqn:PBF-C-M}
\sum_{i=o}^e c_i\colon  \oplus_{i=0}^e h(X)(-i) \to h(\P(\sE))
\end{equation}
is an isomorphism in the category of the Chow motives.

We let $\sM_k(\Lambda)^{0, \dim}$ denote the full subcategory of  $\sM_k(\Lambda)$ consisting
of  the objects of the type $(X, 1, 0)$ and $(X, 1, \dim(X))$ for $X \in \ProjSm_k$. 
Note that the above functor $h\colon  \ProjSm_k^{\op} \to \sM_k(\Lambda)$ factors through a functor  $h\colon  \ProjSm_k^{\op} \to \sM_k(\Lambda)^{0, \dim}$. For a morphism $f\colon Y \to X$ in $\ProjSm_k$, we have 
\[
\Gamma_f^{\op} \in \Corr^{\dim(X)- \dim(Y)}(Y, X)_{\Lambda} = \Mor_{\sM_k(\Lambda)}((Y,1, \dim(Y)),(X, 1, \dim(X))).
\] It therefore follows that $\sM_k(\Lambda)^{0, \dim}$ is a minimal full subcategory of $\sM_k(\Lambda)$ where $\Gamma_f$, $\Gamma_f^{\op}$ and their push-forwards make sense. 

We write $\sM_k = \sM_k(\Z)$ and $\sM_k^{0, \dim} = \sM_k(\Z)^{0, \dim}$.

\vskip .2cm

\subsection{Motivic Functors} \label{sec:Mot-Func}

Let $G\colon  \ProjSm_k^{\op} \to \Ab$ be a presheaf of abelian groups on smooth projective schemes which has the following properties. 

\begin{description}

\item[P1] $G$ has push-forwards, i.e., there exists a covariant functor $G^{\prime}\colon  
\ProjSm_k \to \Ab$ such that for all $X \in \ProjSm_k$, we have $G(X) = G^{\prime}(X)$. 
For a morphism $f\colon X \to Y$, we write $f^* = G(f)$ and $f_*= G^{\prime}(f)$.

\item[P2]
The functors $G$ and $G^{\prime}$ satisfy the base change formula,  i.e., for a cartesian diagram 
\begin{equation}\label{eqn:comp*-1}
\xymatrix@C.8pc{
X^{\prime} \ar[r]^{g^{\prime} } \ar[d]^{f^{\prime}}& X \ar[d]^{f}\\
Y^{\prime} \ar[r]^{g} & Y}
\end{equation}
with $g$ being a flat morphism, we have $G(g) \circ G^{\prime} (f) = G^{\prime}(f^{\prime}) \circ G(g^{\prime})$. In other words,
\begin{equation} \label{eqn:Assum-B-C}
 g^{*} f_* =  f^{\prime}_* g^{\prime *}.
 \end{equation}

\item[P3]
For each $X \in \ProjSm_k$, the group $G(X)$ is a $\CH^*(X)$-module such that for a morphism $f \colon  Y \to X$ in $ \ProjSm_k$, the pull-back
$f^*\colon  G(X) \to G(Y)$ and the push-forward $f_*\colon  G(Y) \to G(X)$ are $\CH^*(X)$-module homomorphisms.  Note that $G(Y)$ is a $\CH^*(Y)$-module and $\CH^*(Y)$ is a $\CH^*(X)$-algebra under the ring homomorphism $\CH^*(X) \to \CH^*(Y)$. 

\item[P4] If $f\colon Y \to X$ is flat, then  $G$ satisfies the projection formula, i.e., for all $\beta \in \CH^*(Y)$ and $x \in G(X)$, we have
\begin{equation} \label{eqn:Assum-P-F}
f_*(\beta \cdot f^*(x) ) = f_*(\beta) \cdot x.
\end{equation}
\end{description}

The following lemma is well known to the experts. We include its proof  for the sake of completion. 

\begin{lem} \label{lem:Ext*-1}
Let $G\colon  \ProjSm_k^{\op} \to \Ab$ satisfy  {\bf P1-P4}. It then extends to an additive functor $F\colon  \sM_k^{0, \dim} \to \Ab$ such that $F((X,1, 0))= G(X) = F((X, 1, \dim(X)))$ for all $X \in \ProjSm_k$, and $f^* = F(\Gamma_f)$ and $f_* = F(\Gamma_f^{\op})$ for all $f\colon  Y \to X$. 
\end{lem}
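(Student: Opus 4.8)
The goal is to build an additive functor $F\colon \sM_k^{0,\dim} \to \Ab$ extending $G$, which amounts to defining $F$ on the two kinds of objects $(X,1,0)$ and $(X,1,\dim X)$, defining it on the three kinds of generating morphisms (graphs $\Gamma_f$, transpose graphs $\Gamma_f^{\op}$, and general correspondences where they make sense), and checking functoriality and additivity. I would first record exactly which morphisms appear in $\sM_k^{0,\dim}$: between $(X,1,0)$ and $(Y,1,0)$ we have $\Corr^0(X,Y) \ni \Gamma_g$ for $g\colon Y\to X$; between $(X,1,\dim X)$ and $(Y,1,\dim Y)$ we have $\Corr^{\dim Y-\dim X}(X,Y)\ni \Gamma_f^{\op}$ for $f\colon X\to Y$; and between the two types of objects we get more general correspondence groups. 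So the heart of the matter is: a correspondence $\alpha\in\Corr^i(X,Y)$ should act on $G$ as a map $\alpha_*\colon G(X)\to G(Y)$ in a way compatible with composition of correspondences, and reducing to $f^*$ on $\Gamma_f$ and $f_*$ on $\Gamma_f^{\op}$.

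\textbf{Defining the action of a correspondence.} Given $\alpha\in\Corr^i(X,Y)=\CH^{d_X+i}(X\times Y)$ (take $X$ irreducible), I would define $\alpha_\star\colon G(X)\to G(Y)$ by the classical formula $\alpha_\star(x) = (p_Y)_*\big(\alpha\cdot p_X^*(x)\big)$, where $p_X,p_Y$ are the projections from $X\times Y$, the product $\alpha\cdot p_X^*(x)$ uses the $\CH^*(X\times Y)$-module structure on $G(X\times Y)$ from \textbf{P3} (after pulling $x$ back via the flat projection $p_X$), and the pushforward is along $p_Y$, which exists since $X$ (hence $X\times Y\to Y$) is projective. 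The two sanity checks: for $\alpha=\Gamma_g$ with $g\colon Y\to X$, \lemref{lem:Corr*-1} (or a direct computation using that $\Gamma_g\cong Y$ embedded via $(g,\id)$) plus the projection formula \textbf{P4} and flat base change \textbf{P2} should give $\alpha_\star=g^*$; for $\alpha=\Gamma_f^{\op}$ with $f\colon X\to Y$ one similarly gets $\alpha_\star=f_*$. These are exactly the normalizations demanded in the statement.

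\textbf{Functoriality.} The key identity is $(\beta\circ\alpha)_\star = \beta_\star\circ\alpha_\star$ for composable $\alpha\in\Corr(X,Y)$, $\beta\in\Corr(Y,Z)$. Unwinding both sides over $X\times Y\times Z$ via the projections $p_{12},p_{23},p_{13}$, this reduces to the projection formula, flat base change, and the associativity/functoriality of pushforward and of the $\CH^*$-module action — i.e. exactly the bookkeeping that \textbf{P1}--\textbf{P4} are designed to supply. Concretely I would write $\beta_\star(\alpha_\star(x))$, substitute the definitions, pull everything up to $X\times Y\times Z$ using \textbf{P2} for the flat projections, combine the two Chow classes into $p_{12}^*\alpha\cdot p_{23}^*\beta$ using \textbf{P3} and \textbf{P4}, and push down along $p_{13}$; matching this against $(p_{13})_*\big((p_{12}^*\alpha\cdot p_{23}^*\beta)\cdot p_X^*(x)\big)$ is the definition of $(\beta\circ\alpha)_\star$. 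Identity morphisms ($\Gamma_{\id}=\Delta_X$) act as the identity by the same computation. Additivity in $\alpha$ is immediate since pullback, the module action, and pushforward are all additive; one then checks that the idempotent condition is vacuous here (all $p=1$) and that $F$ is well-defined on $\Mor_{\sM_k^{0,\dim}}$, where the morphism groups are honest $\Corr$-groups.

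\textbf{Main obstacle.} The only real subtlety — and where I would be most careful — is that the product in \eqref{eqn:comp*-0} and in my formula for $\alpha_\star$ takes place in $\CH^*$ of a \emph{smooth} ambient scheme (a product of smooth projectives), so the refined-intersection machinery of \cite[\S8.1]{Fulton} and the compatibilities \eqref{eqn:CG*-1} are in force; I must make sure every pushforward I invoke is along a projective morphism (all the projections $p_Y$, $p_{13}$ are, since the fibers are projective) and every base-change square I use in \textbf{P2} genuinely has a flat leg (the projections $X\times Y\to Y$ etc.\ are flat, being base changes of $X\to\Spec k$). There is no genuine moving-lemma issue because we only ever act by cycle \emph{classes}, not cycles. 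So the proof is essentially: define $\alpha_\star$ by the standard formula, verify the two normalizations on $\Gamma_f$ and $\Gamma_f^{\op}$, and verify compatibility with composition — each step a direct application of \textbf{P1}--\textbf{P4} together with \lemref{lem:Corr*-1}.
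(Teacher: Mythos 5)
Your proposal is correct and follows essentially the same route as the paper: define $F(\alpha)(x)=p_{Y*}(\alpha\cdot p_X^*(x))$ using the $\CH^*$-module structure from \textbf{P3}, verify the normalizations $F(\Gamma_f)=f^*$ and $F(\Gamma_f^{\op})=f_*$, and check compatibility with composition of correspondences over $X\times Y\times Z$ via the projection formula, flat base change, and module-homomorphism properties of push-forward. This matches the paper's argument step for step, so no further comparison is needed.
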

 
 \begin{proof}
 Note that sending  the Chow motive $(X, p, n)$ to the smooth projective scheme $X$ and looking a morphism between the Chow motives as a correspondence, we get a well defined functor $\sM_k \to \Co$. Observe that the composite functor $\Theta\colon  \ProjSm_k^{\op} \to \Co$ is same as the functor in \secref{sec:Prel-Corr}.
 It therefore suffices to show  that $G$ extends to $F\colon  \Co \to \Ab$ such that $f^* = F(\Gamma_f)$ and $f_* = F(\Gamma_f^{\op})$ for all $f\colon  Y \to X$.
 
 We define $F\colon  \Co \to \Ab$ so that on objects, we have $F(X) = G(X)$ and for a correspondence $\alpha \in \Corr(X, Y) = \CH^*(X \times Y)$, the group homomorphism $F(\alpha)\colon  G(X) \to G(Y)$ is given by 
 \[
 F(\alpha)(x) = p_{Y *} (\alpha \cdot p_X^* (x)),
 \] 
 where $p_Y\colon  X \times Y \to Y$ and $p_X \colon  X \times Y \to X$ are the projection maps. 
 First, we verify that for a morphism $f\colon  Y \to X$, we have $f^* = F(\Gamma_f)$ and $f_* = F(\Gamma_f^{\op})$. This follows because for  $x \in G(X)$ and $y \in G(Y)$, we have 
 \begin{eqnarray*}
 f^*(x) & = &  p_{Y*} \circ \iota_{\Gamma_f *} (f^*(x))\\
 & = & p_{Y*} \left(  \iota_{\Gamma_f *} (\iota_{\Gamma_f}^* \circ p_X^* (x)) \right)\\
 & = & p_{Y*} \left( \iota_{\Gamma_f *}(1)  \cdot  p_X^* (x) \right) \\
 & = &  p_{Y*} \left( \Gamma_f   \cdot  p_X^* (x) \right)\\
 & = & F(\Gamma_f)(x) 
 \end{eqnarray*}
 and if $\sigma \colon  X \times Y \to Y \times X$ denotes the involution, then 
 \begin{eqnarray*}
 f_*(y) & = &  p_{X*} \circ \sigma_* \circ \iota_{\Gamma_f *} (f_*(y))\\
 & = & p_{X*} \left( \sigma_* \circ  \iota_{\Gamma_f *} (\sigma^* \circ \iota_{\Gamma_f}^* \circ p_Y^* (y)) \right)\\
 & = & p_{X*} \left(\left( \sigma_* \circ \iota_{\Gamma_f *}(1)\right)  \cdot  p_Y^* (y) \right) \\
 & = &  p_{X*} \left( \Gamma_f^{\op}   \cdot  p_Y^* (y) \right)\\
 & = & F(\Gamma_f^{\op})(y). 
 \end{eqnarray*}
 We now verify that $F(\id) = \id$ and $F(\beta \circ \alpha) = F(\beta) \circ F(\alpha)$ for all $\alpha \in \Corr(X, Y)$ and $\beta \in \Corr(Y, Z)$. 
Note that $F(\id_{\Theta(X)})= \id_{X}^* = \id_{G(X)}$. For $\alpha  \in \Corr(X, Y)$, $\beta \in \Corr(Y, Z)$ 
and $x \in G(X)$, we consider the  diagram 
 \begin{equation}\label{eqn:Exn*-1-1}
\xymatrix@C.8pc{
&  & X \times Y \times Z    \ar[ld]_-{p_{12}} \ar[d]^-{p_{23}}  \ar[rd]^-{p_{13}}  &  \\
 & X \times Y \ar[dl]_-{p_X} \ar[d]^-{p_Y}& Y \times Z \ar[ld]^-{p_Y^{\prime}} \ar[rd]^-{p_Z^{\prime}}& X \times Z \ar[d]^-{p_Z} \ar[dr]^-{p_X^{\prime}}\\
 X & Y & & Z & X .}
\end{equation}
Observe that both squares in \eqref{eqn:Exn*-1-1}
commute.  With these notations, we have 
  \begin{eqnarray*}
 F(\beta \circ \alpha) (x) & = & 
 F\left(p_{13*}\left(p_{23}^* (\beta) p_{12}^*(\alpha) \right)  \right) (x)\\ 
 & =& p_{Z *} \left( \left(p_{13*}\left(p_{23}^* (\beta)  p_{12}^*(\alpha) \right)  \right) \cdot p_X^{\prime *} (x) \right)\\
 & =^1& p_{Z *} \circ p_{13*} \left ( \left(p_{23}^* (\beta)  p_{12}^*(\alpha) \right)  \cdot \left ( p_{13}^* \circ p_X^{\prime *} (x) \right) \right) \\
  & =^2& p_{Z * }^{\prime} \circ p_{23*} \left ( \left(p_{23}^* (\beta)  p_{12}^*(\alpha) \right)  \cdot \left ( p_{13}^* \circ p_X^{\prime *} (x) \right) \right)\\
  & =^3& p_{Z * }^{\prime} \circ p_{23*} \left ( \left(p_{23}^* (\beta)  p_{12}^*(\alpha) \right)  \cdot \left ( p_{12}^* \circ p_X^{*} (x) \right) \right)\\
   & =& p_{Z * }^{\prime} \circ p_{23*} \left(p_{23}^* (\beta) \cdot \left( p_{12}^*(\alpha)   \cdot \left ( p_{12}^* \circ p_X^{*} (x) \right) \right) \right)\\
  & =& p_{Z * }^{\prime} \circ p_{23*}   \left(p_{23}^* (\beta) \cdot (p_{12}^* (\alpha \cdot p_X^*(x))) \right)\\
  & =^4& p_{Z * }^{\prime}  \left( \beta \cdot (p_{23*} \circ p_{12}^* (\alpha \cdot p_X^*(x)))  \right)\\
  & =^5& p_{Z * }^{\prime}  \left( \beta \cdot \left( p_Y^{\prime *} \circ p_{Y *} (\alpha \cdot p_X^*(x)) \right)  \right) \\
  & = & p_{Z * }^{\prime}  \left( \beta \cdot \left( p_Y^{\prime *} (F(\alpha) (x)) \right)\right)\\
  & = & F(\beta) \circ F(\alpha) (x),
 \end{eqnarray*}
 where the equality $=^1$ follows from \eqref{eqn:Assum-P-F}, the equalities $=^2$ and $=^3$ follow from the commutative squares in \eqref{eqn:Exn*-1-1}, the equality $=^4$ follows because $p_{23*}$ is a homomorphism of $\CH^*(Y \times Z)$-modules, and the equality $=^5$ follows from  \eqref{eqn:Assum-B-C}. By construction, $F$ is additive. 
 This completes the proof. 
  \end{proof}

\begin{remk} 

Observe that the functor $G$ in \lemref{lem:Ext*-1} actually extends to a functor 
$F \colon \sM_k \to \Ab$ such that the abelian groups $F(X, p, m)$ depend only on 
$X$. 
By introducing the smaller category $ \sM_k^{0, \dim}$ and writing 
 \lemref{lem:Ext*-1} in its current form, we want to emphasise that 
 $ \sM_k^{0, \dim}$ is a minimal  subcategory of $\sM_k$ that 
 is sufficient for 
 the results of next sections. 
\end{remk}

 \begin{remk} \label{rem:Gr-Fun}
We can also consider the class of functors $F\colon  \sM_k \to {\bf GrAb}$ such that 
 $F((X, 1, m)) \cong F((X, 1, 0))[m]$. Compositing them with the forgetful functor $ {\bf GrAb} \to  {\bf Ab}$
 and restricting to $ \sM_k^{0, \dim}$, we obtain the functors as in \lemref{lem:Ext*-1}.
 \end{remk}

\vskip .2cm

\section{Symmetric powers and Correspondences}\label{sec:Symm-P-Corr}

 In this section, we recall some basic properties of the symmetric powers of a curve and define some special correspondences on these smooth schemes. These correspondences play the most  crucial role in the study of ``nice'' invariants of the symmetric powers. Towards the end of this section, we use these correspondences to generalise  \cite[Theorems~1 and~2]{Co}.

 \vskip .2cm

\subsection{Correspondences on the symmetric powers}\label{sec:Symm-P-Corr-On-Symm}

Let $C$ be a  smooth projective curve over an algebraically closed field $k$ and let $p \in C$ be a closed point. We fix integers $n\geq m \geq 0$.
The embedding $\iota_p\colon  \Spec k \inj C$ then yields a closed embedding $j_{m, n}\colon  C[m] \to C[n]$ such that on the closed points, we have
$j_{m,n} ([x_1, \dots, x_m]) = [x_1, \dots, x_m, p, \dots, p]$. Since this embedding respects the action of the 
symmetric groups $S_i$ on $C[i]$ (for $i=m, n$), it induces an embedding $\iota_{m,n}\colon C(m) \to C(n)$. We also have the projection map $\pr_{n, m}\colon  C[n] \to C[m]$ onto the first $m$-components. Observe that this map does not respect the action of  $S_i$ and therefore does not induce a morphism from $C(n) \to C(m)$. But we can consider a correspondence from $C(m)$ to $C(n)$ (which plays the role of the projection morphism $\pr_{n,m}$)
as follows. Let 
\begin{equation} \label{eqn:Proj-corr}
\Gamma = (\rho_m \times \rho_n)_* (\Gamma_{\pr_{n, m}}) \in \Corr^0 (C(m), C(n)),
\end{equation}
where $ \Gamma_{\pr_{n, m}} \in \Corr^0(C[m], C[n])$.

Observe that $\pr_{n,m} \circ j_{m,n} = \id \colon  C[m] \to C[m]$. We therefore have that $\Gamma_{j_{m,n}} \circ \Gamma_{\pr_{n, m}} = \Delta_{C[m]} \in \Corr^0(C[m], C[m])$.  We prove that the element $\Gamma_{\iota_{m, n}}  \circ \Gamma$ is very close to the element $\Delta = \Delta_{C(m)}$. This result is similar to \cite[Corollary~1]{Co} and 
essentially follows from \cite[Proposition~1]{Co}.  To see the proof with complete details, we break it into a couple of Lemmas.

\begin{lem}\label{lem:Col-lem*-1.5}
For $m \geq i$, let $\Gamma_{m,i} = (\rho_{i} \times \rho_{m}) (\Gamma_{\pr_{m, i }}) \subset C(i) \times C(m) \subset C(m) \times C(m)$. Then for $n\geq m$, we have
\begin{equation} \label{eqn:inv-image*-1}
(\id \times \iota_{m, n})^{-1} \Gamma_{n, m} = \Delta \cup \left( \cup_{i< m} \Gamma_{m, i} \right) \subset C(m) \times C(m).
\end{equation}
\end{lem}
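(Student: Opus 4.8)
The statement is purely set-theoretic: I want to identify the scheme-theoretic preimage (as a reduced subset) of $\Gamma_{m,n}\subset C(m)\times C(n)$ under $\id\times\iota_{m,n}\colon C(m)\times C(m)\to C(m)\times C(n)$. Since everything in sight is a cycle coming from an explicit correspondence built out of the finite surjective maps $\rho_m,\rho_n$ and the coordinate projection $\pr_{n,m}$, I would first unwind all the definitions on closed points — which suffices because we are over an algebraically closed field and the subsets in question are closed and we only want an equality of underlying sets. Concretely, a closed point $(a,b)\in C(m)\times C(m)$ lies in $(\id\times\iota_{m,n})^{-1}\Gamma_{m,n}$ iff $(a,\iota_{m,n}(b))\in\Gamma_{m,n}$, and $\iota_{m,n}(b)=b+(n-m)[p]$ as a $0$-cycle. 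So I reduce to describing which pairs $(a,c)$ with $a\in C(m)$, $c\in C(n)$, $c\geq (n-m)[p]$, lie in $\Gamma_{m,n}$.

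**Describing $\Gamma_{m,n}$ on points.** By definition $\Gamma_{m,n}=(\rho_i\times\rho_m)(\Gamma_{\pr_{m,i}})$ — wait, more precisely $\Gamma_{m,n}$ in the lemma is $(\rho_n\times\rho_m)(\Gamma_{\pr_{n,m}})$ up to the labelling in the statement; in any case its closed points are exactly the pairs $\big((x_1,\dots,x_m),\,(x_1,\dots,x_m,y_{m+1},\dots,y_n)\big)$, i.e. pairs $(a,c)\in C(m)\times C(n)$ such that $c\geq a$ as effective $0$-cycles (equivalently $c=a+e$ for some effective $0$-cycle $e$ of degree $n-m$). This is immediate from the fact that $\pr_{n,m}$ forgets the last $n-m$ coordinates and $\rho$'s just symmetrize. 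Now impose $c=b+(n-m)[p]$ with $b\in C(m)$: the condition $c\geq a$ becomes $b+(n-m)[p]\geq a$. Writing $b=\sum[u_j]$, $a=\sum[v_j]$, this says that $a$ is obtained from $b$ by replacing some subset of the points of $b$ (of size $m-i$, say, with $0\leq m-i\leq \min(m,n-m)$ but in the relevant range $n\geq m$ there is no upper constraint beyond $i\geq 0$) by copies of $p$. In other words, $a$ and $b$ agree at $i$ of their points and the remaining $m-i$ points of $a$ are all equal to $p$ — but that is precisely the condition that $(a,b)\in\Gamma_{m,i}$ (the locus $C(i)\times C(m)$ embedded via "agree on $i$ coordinates, pad with $p$"), where $i=m$ gives $\Delta$. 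Taking the union over $i$ from $0$ (or from the lowest possible value) up to $m$ yields the right-hand side of \eqref{eqn:inv-image*-1}. I should double-check the indexing: the diagonal $\Delta$ is the $i=m$ case, and $\cup_{i<m}\Gamma_{m,i}$ collects the rest, matching the stated formula.

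**The one subtlety.** The genuinely careful point is the identification of $\Gamma_{m,i}$ as written in the lemma: it is defined as $(\rho_i\times\rho_m)(\Gamma_{\pr_{m,i}})$ sitting inside $C(i)\times C(m)$ and then included into $C(m)\times C(m)$ via $\iota_{i,m}\times\id$ (this inclusion $C(i)\times C(m)\subset C(m)\times C(m)$ must be the one using the base point $p$). So I need to confirm that "$a$ agrees with $b$ at $i$ points and has $p$ at the other $m-i$" is exactly the image of this composite — again a point-level check using the explicit description of $\rho$ and $\pr_{m,i}$ and of the padding embedding $\iota_{i,m}$. Once the point sets are matched on both sides, equality of closed subsets of $C(m)\times C(m)$ follows since both sides are closed (images of proper maps / finite unions of such) and we are over an algebraically closed field, so the reduced structures are determined by the closed points. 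I do not expect to need any moving lemma or intersection theory here — this lemma is a combinatorial/point-set identity, and the only place to be cautious is keeping the role of the base point $p$ and the various padding embeddings straight throughout.
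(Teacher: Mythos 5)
Your approach coincides with the paper's: both arguments reduce to a computation on closed points, describe $\Gamma_{m,n}$ as the locus of pairs $(a,c)\in C(m)\times C(n)$ with $a\le c$ as effective $0$-cycles, and then decompose the preimage according to how many points of $a$ are matched inside $b$; no intersection theory is needed, exactly as you say, and your care about the padding embeddings is the right thing to check.

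However, the parenthetical in your middle paragraph is where the real content sits, and your dismissal of it is incorrect. The condition $a\le b+(n-m)[p]$ does impose the upper bound $m-i\le n-m$ on the number of points of $a$ not matched inside $b$: only $n-m$ spare copies of $p$ are available, and multiplicities count. Concretely, take $a=m[p]$ and $b$ an effective cycle of degree $m$ avoiding $p$; then $(a,b)\in\Gamma_{m,0}$, but $a\not\le b+(n-m)[p]$ whenever $n<2m$, so $(a,b)$ is not in the preimage. Thus for $m\le n<2m$ the preimage equals $\Delta\cup\bigl(\bigcup_{2m-n\le i<m}\Gamma_{m,i}\bigr)$, which is strictly smaller than the right-hand side of \eqref{eqn:inv-image*-1}; the stated equality really requires $n\ge 2m$, and your claim that ``there is no upper constraint beyond $i\ge 0$'' is the step that fails. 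To be fair, the paper's own proof hides the same multiplicity bookkeeping in the loose set notation $\{y_1,\dots,y_m\}\subset\{x_1,\dots,x_m,p,\dots,p\}$, and the discrepancy is harmless downstream: what \lemref{lem:Col-lem*-2} actually uses is the inclusion of the preimage into $\Delta\cup\bigl(\cup_{i<m}\Gamma_{m,i}\bigr)$, the fact that every component of the preimage has dimension $m$, and that each $\Gamma_{m,i}$ with $i<m$ lies in $C(m-1)\times C(m)$ --- and your argument (like the paper's) does establish that inclusion correctly. But as a proof of the equality as stated, this is a genuine gap, and the honest fix is either to assume $n\ge 2m$ or to weaken the conclusion to the containment just described.
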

\begin{proof}
We look at both sides of \eqref{eqn:inv-image*-1} as closed subschemes of $C(m) \times C(n)$.  On $k$-rational points, we have
\begin{eqnarray*}
&& (\id \times \iota_{m, n})^{-1} \Gamma_{n,m} \\
&& \hskip 1cm  = \{(y_1, \dots, y_m), (x_1, \dots, x_m, p, \dots, p)~|~ \{y_1, \dots, y_m\} \subset \{x_1, \dots, x_m, p, \dots, p\} \}\\
 & & \hskip 1cm  =  \cup_{i \leq m} \{(y_1, \dots, y_i, p, \dots, p), (x_1, \dots, x_m, p, \dots, p)~|~ \{y_1, \dots, y_i\} \subset \{x_1, \dots, x_m\} \}\\
 & & \hskip 1cm =^1 \Delta \cup  \left( \cup_{i< m} \Gamma_{m, i} \right) \subset C(m) \times C(n), 
\end{eqnarray*}
where the equality $=^1$ follows because 
$\Gamma_{m, i} =  \{(y_1, \dots, y_i), (x_1, \dots, x_m)~|~ \{y_1, \dots, y_i\} \subset \{x_1, \dots, x_m\} \} \subset C(i) \times C(m)\subset C(m) \times C(n)$. 
\end{proof}

Observe that the embedding $\iota_{m-1, m}\colon C(m-1) \inj C(m)$ induces a homomorphism $(\iota_{m-1, m}\times \id)_*\colon  \Corr^{i}(C(m-1), C(m)) \to \Corr^{i}(C(m), C(m))$. We now prove 
one of the most crucial lemmas.

\begin{lem}\label{lem:Col-lem*-2}
There exists $Y \in \Corr^{0}(C(m-1), C(m))$ such that
 \begin{equation} \label{eqn:Col-lem*-2-0}
\Gamma_{\iota_{m, n}}  \circ \Gamma - \Delta=(\id \times \iota_{m, n})^{*} \Gamma - \Delta = (\iota_{m-1,m}\times \id)_* (Y) = Y \circ \Gamma_{\iota_{m-1, m}} \in \Corr^0(C(m), C(m)).
\end{equation}
\end{lem}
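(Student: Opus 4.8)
The plan is to rewrite $\Gamma_{\iota_{m,n}}\circ\Gamma$ as a Gysin pull-back of $\Gamma$, compute that pull-back at the level of cycles using \lemref{lem:Col-lem*-1.5}, and then split off its diagonal part. For the reduction step: \lemref{lem:Corr*-1}, equation \eqref{eqn:Corr*-2}, applied with $f=\iota_{m,n}$ and $\alpha=\Gamma$, gives $\Gamma_{\iota_{m,n}}\circ\Gamma = (\id\times\iota_{m,n})^{*}\Gamma$, the first equality of \eqref{eqn:Col-lem*-2-0}; dually, equation \eqref{eqn:Corr*-3} applied with $f=\iota_{m-1,m}$ and an arbitrary $\beta=Y\in\Corr^{0}(C(m-1),C(m))$ gives $Y\circ\Gamma_{\iota_{m-1,m}} = (\iota_{m-1,m}\times\id)_{*}(Y)$, so the last equality of \eqref{eqn:Col-lem*-2-0} is automatic. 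It then remains to produce $Y$ with $(\id\times\iota_{m,n})^{*}\Gamma - \Delta = (\iota_{m-1,m}\times\id)_{*}(Y)$.

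To compute the cycle $(\id\times\iota_{m,n})^{*}\Gamma$: since $C(m)$ and $C(n)$ are smooth and $\iota_{m,n}$ is a closed immersion, $\id\times\iota_{m,n}$ is a regular embedding, and $\Gamma$ is (a multiple of) the class of the reduced subvariety $\Gamma_{m,n}\subset C(m)\times C(n)$ appearing in \lemref{lem:Col-lem*-1.5}. That lemma identifies the set-theoretic preimage $(\id\times\iota_{m,n})^{-1}\Gamma_{m,n}$ with $\Delta\cup(\cup_{i<m}\Gamma_{m,i})$, and since $\dim\Gamma_{m,n}+\dim(C(m)\times C(m))-\dim(C(m)\times C(n)) = m = \dim\Delta = \dim\Gamma_{m,i}$, the intersection has the expected dimension; hence $\Gamma_{m,n}$ is already in good position for $\id\times\iota_{m,n}$ (cf.\ \secref{sec:Prel-CH}) and the refined Gysin pull-back is
\begin{equation*}
(\id\times\iota_{m,n})^{*}\Gamma = c_{0}\,\Delta + \sum_{i<m} c_{i}\,\Gamma_{m,i},\qquad c_{i}\in\Z_{\geq 0}.
\end{equation*}
The point is to check $c_{0}=1$ (the coefficient occurring in \eqref{eqn:Col-lem*-2-0}): over the open locus of $\Delta$ parametrising $m$-tuples of distinct points all different from $p$, the scheme $C(n)$ is étale-locally the product of an open of $C[m]$ with $\Sym^{n-m}$ of a neighbourhood of $p$; under this identification $\Gamma$ becomes the graph of $\pr_{n,m}$ times that extra smooth factor, and $\iota_{m,n}$ becomes the embedding $\iota'_{m,n}\colon C[m]\hookrightarrow C[n]$ adding $n-m$ copies of $p$ followed by inclusion of the point $p^{n-m}$ in the $\Sym^{n-m}$-factor. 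Using the relation $\pr_{n,m}\circ\iota'_{m,n}=\id_{C[m]}$ noted before \eqref{eqn:Proj-corr}, the two cycles meet transversally along $\Delta$, so $c_{0}=1$.

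It remains to conclude from $(\id\times\iota_{m,n})^{*}\Gamma - \Delta = \sum_{i<m} c_{i}\,\Gamma_{m,i}$. For $i<m$, every point of $\Gamma_{m,i}$ has $p$ in its first-coordinate multiset, so $\Gamma_{m,i}\subset\iota_{m-1,m}(C(m-1))\times C(m)$, and the whole right-hand side is supported on the image of the closed immersion $\iota_{m-1,m}\times\id\colon C(m-1)\times C(m)\hookrightarrow C(m)\times C(m)$. A cycle supported on that image is the proper push-forward of a unique cycle $Y$ on $C(m-1)\times C(m)$, and since each $\Gamma_{m,i}$ has codimension $m-1$ there, $Y\in\CH^{m-1}(C(m-1)\times C(m))=\Corr^{0}(C(m-1),C(m))$. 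This $Y$, together with the first paragraph, yields all the equalities in \eqref{eqn:Col-lem*-2-0}.

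I expect the main obstacle to be the multiplicity statement $c_{0}=1$: one has to make the étale-local product description of $C(n)$ away from the point $p$ precise enough to transport the transversality coming from $\pr_{n,m}\circ\iota'_{m,n}=\id_{C[m]}$ down to $C(m)\times C(n)$, and check separately that the $\Sym^{n-m}$-direction, which $\Gamma_{m,n}$ sees but $\iota_{m,n}$ collapses to a single point, contributes no excess multiplicity. All the remaining steps are formal consequences of \lemref{lem:Corr*-1}, \lemref{lem:Col-lem*-1.5}, and standard properties of refined intersection.
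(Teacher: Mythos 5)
Your proposal is correct and structurally the same as the paper's: both reduce the outer equalities of \eqref{eqn:Col-lem*-2-0} to \lemref{lem:Corr*-1}, use \lemref{lem:Col-lem*-1.5} to see that the refined pull-back $(\id\times\iota_{m,n})^{*}\Gamma$ is a cycle supported on $\Delta\cup\bigl(\bigcup_{i<m}\Gamma_{m,i}\bigr)$ (the intersection being proper), and then identify the part away from $\Delta$ as the push-forward of a cycle of codimension $m-1$ on $C(m-1)\times C(m)$, i.e.\ an element of $\Corr^{0}(C(m-1),C(m))$. The only place you diverge is the multiplicity-one statement for $\Delta$: the paper restricts along $\phi\colon C_0(m)\times C_0(m)\to C(m)\times C(n)$, quotes \cite[Proposition~1]{Co} to get $\phi^{*}(\Gamma_{m,n})=\Delta_{C_0(m)}$, and concludes via the localization exact sequence of cycle groups, whereas you compute the coefficient of $\Delta$ directly by an \'etale-local transversality argument at a point of $\Delta$ with distinct support avoiding $p$. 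Your sketch does go through: near such a point $C(n)$ factors \'etale-locally as a neighbourhood in $C[m]$ times $\Sym^{n-m}(V)$ with $V\ni p$, the cycle $\Gamma_{m,n}$ becomes the graph of the projection forgetting the $\Sym^{n-m}(V)$-factor ($m$ local equations), the image of $\id\times\iota_{m,n}$ is cut out by the $n-m$ equations defining the point $p^{n-m}\in\Sym^{n-m}(V)$, and these $n$ equations are independent, so the multiplicity along $\Delta$ is $1$; this is precisely the content of Collino's Proposition~1 that the paper outsources, so your route is a self-contained replacement for that citation at the cost of writing out the local product structure. One point you should pin down rather than hedge: the computation of $c_0=1$ requires $\Gamma$ to be the class of the reduced variety $\Gamma_{m,n}$ (the convention asserted in the paper's proof and used in \cite{Co}), not the literal push-forward $(\rho_n\times\rho_m)_*[\Gamma_{\pr_{n,m}}]$, which is generically $m!(n-m)!$-to-one onto its image; with the wrong normalization the diagonal coefficient would not be $1$.
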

\begin{proof}
The first equality and the last equality in \eqref{eqn:Col-lem*-2-0} follow from \lemref{lem:Corr*-1}. To prove      the middle equality, we let $C_0(m) = C(m) \setminus C(m-1)$ and let $\phi \colon  C_0(m) \times C_0(m) \to C(n) \times C(m)$ be the composition of the open embedding $C_0(m) \times C_0(m) \inj C(m) \times C(m)$ with the closed embedding $\id \times \iota_{m,n} \colon  C(m) \times C(m) \inj C(m) \times C(n) $. 
 Consider the diagram
\begin{equation}\label{eqn:pull-back*-1}
\xymatrix@C.6pc{
 & & Z^{m}_{\id \times \iota_{m,n}}(C(m)\times C(n)) \ar[d]^{(\id \times \iota_{m,n})^*}  \ar[rd]^{\phi^*} & &\\ 
0 \ar[r] & Z^{m-1}(X) \ar[r] & Z^m(C(m) \times C(m)) \ar[r]& Z^m(C_0(m) \times C_0(m)) \ar[r]& 0,}
\end{equation}
where $X = C(m-1) \times C(m) \cup C(m) \times C(m-1)\subset C(m) \times C(m)$ is the compliment of $C_0(m) \times C_0(m)$. In the notations of \secref{sec:Prel-CH}, 
it follows from \lemref{lem:Col-lem*-1.5} that $ \Gamma_{n,m} \in Z^{m}_{\id \times \iota_{m,n}}(C(m)\times C(n))$. Note that $\Gamma$ is the class of $\Gamma_{n,m}$ in $\Corr^0(C(m), C(n))$. 
By \secref{sec:Prel-CH}, the cycle 
 $(\id \times \iota_{m,n})^* (\Gamma)$ is then  supported on 
$(\id \times \iota_{m,n})^{-1} (\Gamma_{n,m}) = \Delta \cup Z$, where $Z \subset C(m-1) \times C(m) \subset C(m) \times C(m)$. Moreover, 
\cite[Proposition~1]{Co} implies that $\phi^* (\Gamma_{n,m}) = \Delta_{C_0(m)}$. Since the bottom row in \eqref{eqn:pull-back*-1} is exact, it follows that $(\id \times \iota_{m, n} )^* \Gamma - \Delta = (\iota_{m-1,m} \times \id )_* (Y)$ for some $Y \in \CH^{m-1}(C(m-1)\times C(m)) =  \Corr^{0}(C(m-1) , C(m))$. This completes the proof.
\end{proof}

\vskip .2cm

\subsection{Functors on the symmetric powers of a curve} \label{subsec:F(C(n))}
%
In this section, we investigate the relation between the groups $F(C(n))$, where $F\colon \sM_k(\Lambda)^{0, \dim} \to {\Ab}$. We first prove a generalisation of \cite[Theorem~1]{Co}.

\begin{lem} \label{lem:inj}
Let $F\colon \ProjSm_k^{\op} \to \Ab$ be a functor that extends to an additive functor 
$F\colon  \sM_k(\Lambda)^{0, \dim} \to \Ab$. Then for all $0 \leq m \leq n$, 
the natural map 
\[
(\iota_{m,n})_* := F(\Gamma_{\iota_{m,n}}^{\op})\colon  F((C(m), 1, m)) \to  F((C(n), 1, n))
\]
 is injective. 
\end{lem}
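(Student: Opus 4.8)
The natural strategy is to produce a one-sided inverse to $(\iota_{m,n})_*$ up to multiplication by an integer (the degree of some covering), or better, to exhibit a correspondence $\Phi$ from $C(n)$ back to $C(m)$ such that $\Phi\circ\Gamma_{\iota_{m,n}}^{\op}$ acts as a nonzero integer multiple of the identity — unless we can arrange it to be exactly the identity. The obvious candidate for $\Phi$ is the projection-type correspondence $\Gamma\in\Corr^0(C(m),C(n))$ defined in~\eqref{eqn:Proj-corr}, or rather its dual $\Gamma^{\op}\in\Corr^0(C(n),C(m))$. So the first step is to compute the composition $\Gamma^{\op}\circ\Gamma_{\iota_{m,n}}^{\op}$, equivalently the dual of $\Gamma_{\iota_{m,n}}\circ\Gamma$, inside $\Corr^0(C(m),C(m))$.

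\textbf{Key steps.} First, I would reduce to the case $n=m+1$ and iterate, since $\iota_{m,n}$ factors as a composite of one-step inclusions $\iota_{m,m+1}$, $\iota_{m+1,m+2}$, \dots, $\iota_{n-1,n}$, and the corresponding correspondences $\Gamma$ compose compatibly; functoriality of $F$ then reduces injectivity of $(\iota_{m,n})_*$ to injectivity of each $(\iota_{j,j+1})_*$. Second, by \lemref{lem:Col-lem*-2} we already know $\Gamma_{\iota_{m,n}}\circ\Gamma - \Delta_{C(m)} = Y\circ\Gamma_{\iota_{m-1,m}}$ for some $Y\in\Corr^0(C(m-1),C(m))$; dualizing this identity gives $\Gamma^{\op}\circ\Gamma_{\iota_{m,n}}^{\op} = \Delta_{C(m)} + \Gamma_{\iota_{m-1,m}}^{\op}\circ Y^{\op}$ in $\Corr^0(C(m),C(m))$. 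Third, apply the additive functor $F$ to this identity of correspondences (viewing both sides as morphisms $(C(m),1,m)\to(C(m),1,m)$ in $\sM_k(\Lambda)^{0,\dim}$): we obtain
\[
F(\Gamma^{\op})\circ(\iota_{m,n})_* = \id_{F((C(m),1,m))} + F(\Gamma_{\iota_{m-1,m}}^{\op})\circ F(Y^{\op})
\]
as endomorphisms of $F((C(m),1,m))$. Fourth, I would argue that the correction term $F(\Gamma_{\iota_{m-1,m}}^{\op})\circ F(Y^{\op})$ is \emph{nilpotent}, or at least that the whole right-hand side is an isomorphism, by filtering: $C(m)\supset C(m-1)\supset\cdots\supset C(0)=\mathrm{Spec}\,k$, and the image of $(\iota_{m-1,m})_*$ together with the correction term lands in a "lower-dimensional piece"; an induction on $m$ — with base case $C(0)$ where the statement is trivial — should close this. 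Concretely, decreasing induction on $m$: assuming $(\iota_{m-1,m})_*$ (hence $F(\Gamma_{\iota_{m-1,m}}^{\op})$) is understood, one shows the operator $\id + F(\Gamma_{\iota_{m-1,m}}^{\op})\circ F(Y^{\op})$ is injective, whence $(\iota_{m,n})_*$ is injective because it has a left inverse up to that injective operator.

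\textbf{Main obstacle.} The delicate point is the fourth step: showing that $\id + (\text{correction})$ is injective (or an isomorphism) on $F((C(m),1,m))$. Unlike in Collino's setting, where one has a concrete Chow ring and can filter by codimension of support, here $F$ is an abstract additive functor and we only control it through correspondences. The trick will be to set up the induction so that the correction term factors through $(\iota_{m-1,m})_*$, and to use that $F(\Gamma_{\iota_{m-1,m}}^{\op})\circ Z\circ(\iota_{m-1,m})_* $-type expressions can be collapsed — i.e., to exploit that $\iota_{m-1,m}$ followed by a retraction-like correspondence on $C(m-1)$ behaves well. I expect one needs the analogue of \lemref{lem:Col-lem*-2} at each level and a careful bookkeeping showing the ``defect'' operators form a nilpotent (strictly decreasing) filtration, so that $\id + (\text{nilpotent})$ is invertible; establishing that nilpotency abstractly, purely from the correspondence identities and additivity of $F$, is the crux.
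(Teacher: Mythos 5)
Your first three steps are exactly the paper's: compose with $\Gamma^{\op}$, invoke \lemref{lem:Col-lem*-2}, dualize, and apply the additive functor $F$ to obtain
\[
F(\Gamma^{\op})\circ(\iota_{m,n})_* \;=\; \id \;+\; (\iota_{m-1,m})_*\circ F(Y^{\op})
\]
on $F((C(m),1,m))$. The genuine gap is your fourth step. You propose to conclude by showing that the operator $\id + (\iota_{m-1,m})_*\circ F(Y^{\op})$ is injective, via some nilpotency of the defect, and you yourself flag that you have no abstract argument for this; indeed there is none to be had in this generality, since $F$ is an arbitrary additive functor and $Y$ is completely uncontrolled, so there is no support/codimension filtration to make the correction term nilpotent. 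The point you are missing is that no such invertibility or nilpotency statement is needed.

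The correct conclusion is an element-wise kernel argument combined with induction on $m$ whose hypothesis is quantified over \emph{all} $n$ (base case $m=0$ is trivial because $\chi_n\circ\iota_{0,n}=\id$). Assume $(\iota_{m-1,n'})_*$ is injective for every $n'\geq m-1$, and let $x$ satisfy $(\iota_{m,n})_*(x)=0$. The displayed identity then gives $x=-(\iota_{m-1,m})_*\bigl(F(Y^{\op})(x)\bigr)$, i.e.\ $x=(\iota_{m-1,m})_*(y)$ for some $y$. Applying $(\iota_{m,n})_*$ once more and using $\iota_{m,n}\circ\iota_{m-1,m}=\iota_{m-1,n}$ yields $0=(\iota_{m,n})_*(x)=(\iota_{m-1,n})_*(y)$, so $y=0$ by the induction hypothesis applied to the same $n$, and hence $x=0$. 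Note this also shows why your reduction to one-step inclusions buys nothing: even for $n=m+1$ the argument needs injectivity of the two-step map $(\iota_{m-1,m+1})_*$ at the previous level, so the induction must carry the statement for all $n$ anyway. With your step four replaced by this kernel-factorization argument, your outline becomes precisely the paper's proof; as written, the crux you identify is both unnecessary and not provable in this abstract setting.
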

\begin{proof}
We prove this by induction on $m\geq 0$. For the base case, observe that $C(0) = \Spec k$ and 
the composition $C(0) \xrightarrow{\iota_{0, n}} C(n) \xrightarrow{\chi_n} \Spec k$ is the identity map, where $\chi_n\colon  C(n) \to \Spec k$ is the structure morphism.  It therefore follows that 
\[
\id = F(\Gamma_{\chi_n}^{\op}) \circ F(\Gamma_{\iota_{0,n}}^{\op})\colon  F((C(0), 1, 0)) \to F((C(0), 1, 0)).
\]
In particular, $F(\Gamma_{\iota_{0,n}}^{\op})$ is injective, which proves the base case. We now assume  $m \geq 1$. By the induction hypothesis, for all $n\geq m-1$, the map $(\iota_{m-1,n})_*\colon  F((C(m-1)), 1, m-1) \to F((C(n), 1, n))$ is injective. For $n \geq m$, we consider the diagram 
\begin{equation}\label{eqn:inj*-1}
\xymatrix@C3pc{
 F((C(m-1), 1, m-1)) \ar@{^{(}->}[r]^-{(\iota_{m-1, m})_*} \ar@{^{(}->}[rd]^-{(\iota_{m-1, n})_*} & F((C(m), 1, m)) \ar[d]^-{(\iota_{m,n})_*}\\
  & F((C(n), 1, n)).}
\end{equation}
Note that for all $j \geq i$, $(\iota_{i, j})_* = F( \Gamma_{i, j}^{\op})$ and hence the diagram  \eqref{eqn:inj*-1} is commutative. Let $\Gamma \in \Corr^0(C(m), C(n))$ be as in \eqref{eqn:Proj-corr}. Then $\Gamma^{\op} \in \Mor_{\sM_k(\Lambda)^{0, \dim}}((C(n), 1, n), (C(m), 1, m))$. 

Moreover, by \lemref{lem:Col-lem*-2}, there exists $Y \in \Corr^{0}(C(m-1), C(m))$ such that 
 the following diagram, in the category $\sM_k(\Lambda)^{0, \dim}$, commutes. 
\begin{equation}\label{eqn:inj*-2}
\xymatrix@C3pc{
(C(m), 1, m) \ar[r]^{Y^{\op}} \ar[dr]_-{\Gamma^{\op} \circ \Gamma_{\iota_{m,n}}^{\op} - \Delta^{\op}  } & 
  (C(m-1), 1, m-1)\ar[d]^{ \Gamma_{\iota_{m-1, m}}^{\op}}\\
 & (C(m), 1,m). } 
\end{equation}
Observe that since the category $\sM_k(\Lambda)^{0, \dim}$ is $\Lambda$-linear, the morphism sets in this category have an abelian group structure and hence the diagonal arrow in \eqref{eqn:inj*-2} is well defined. 
Since $F$ is an additive functor and $\Delta^{\op} = \id$,  we have 
\[
F(\Gamma^{\op}) \circ F(\Gamma_{\iota_{m,n}}^{\op}) - \id=  F(\Gamma_{\iota_{m-1, m}}^{\op})\circ F(Y^{\op}).
\]
Let $x \in F((C(m), 1, m))$ such that $F(\Gamma_{\iota_{m,n}}^{\op})(x) = (\iota_{m,n})_*(x) = 0$.  We then have 
\[
x +  F(\Gamma_{\iota_{m-1, m}}^{\op})\circ F(Y^{\op})(x ) =  F(\Gamma^{\op}) \circ F(\Gamma_{\iota_{m,n}}^{\op})(x) = 0. 
\]
In particular, $x =  F(\Gamma_{\iota_{m-1, m}}^{\op}) (y)$, where $y\in F((C(m-1),1,m-1)) $. 
But then 
\[
0 = F(\Gamma_{\iota_{m,n}}^{\op})(x) = F(\Gamma_{\iota_{m,n}}^{\op}) \circ F(\Gamma_{\iota_{m-1, m}}^{\op}) (y) = F(\Gamma_{\iota_{m-1, n}}^{\op}) (y).
\]
By the induction hypothesis, we have $ y = 0$ and hence $x = F(\Gamma_{\iota_{m-1, m}}^{\op}) (y) = 0$. This completes the induction and hence  the proof. 
\end{proof}

We now prove a generalisation of \cite[Theorem~2]{Co}. 

\begin{lem} \label{lem:surj}

Let $F\colon \ProjSm_k^{\op} \to \Ab$ be a functor that  extends to an additive functor 
$F\colon  \sM_k(\Lambda)^{0, \dim} \to \Ab$. Then for all $0 \leq m \leq n$, 
the pull-back map 
\[
(\iota_{m,n})^*\colon  F(C(n)) \to  F(C(m))
\]
 is surjective. 
\end{lem}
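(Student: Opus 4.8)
The plan is to run essentially the same induction on $m$ as in \lemref{lem:inj}, but now going ``downward'': we want to hit every class in $F(C(m))$ by pulling back from $F(C(n))$. First I would dispose of the base case $m=0$: since $C(0) = \Spec k$ and $\chi_n \circ \iota_{0,n} = \id_{\Spec k}$, functoriality gives $(\iota_{0,n})^* \circ \chi_n^* = \id$ on $F(\Spec k)$, so $(\iota_{0,n})^*$ is (split) surjective. For the inductive step, assume $(\iota_{m-1,n})^* \colon F(C(n)) \to F(C(m-1))$ is surjective for all $n \geq m-1$, and consider the triangle
\begin{equation*}
\xymatrix@C3pc{
F(C(n)) \ar[r]^-{(\iota_{m,n})^*} \ar[rd]_-{(\iota_{m-1,n})^*} & F(C(m)) \ar[d]^-{(\iota_{m-1,m})^*}\\
 & F(C(m-1)),}
\end{equation*}
which commutes because $\iota_{m-1,n} = \iota_{m,n} \circ \iota_{m-1,m}$ and pull-back is contravariantly functorial.

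The key input, exactly as in \lemref{lem:inj}, is \lemref{lem:Col-lem*-2}: there is $Y \in \Corr^0(C(m-1), C(m))$ with $\Gamma_{\iota_{m,n}} \circ \Gamma - \Delta = Y \circ \Gamma_{\iota_{m-1,m}}$ in $\Corr^0(C(m), C(m))$, where $\Gamma \in \Corr^0(C(m), C(n))$ is the ``projection'' correspondence of \eqref{eqn:Proj-corr}. Now I would read this identity through $F$ in the \emph{covariant} (pull-back) slot rather than the $(-)^{\op}$ slot: applying the additive functor $F \colon \sM_k(\Lambda)^{0,\dim} \to \Ab$ to this equality of morphisms $(C(m),1,0) \to (C(m),1,0)$ gives, for every $x \in F(C(m))$,
\begin{equation*}
F(\Gamma)(x) \ \text{pulled back appropriately} \ - \ x \ = \ F(\Gamma_{\iota_{m-1,m}})\big(F(Y)(x)\big),
\end{equation*}
i.e. $x = (\iota_{m,n})^*\big(F(\Gamma)(x)\big) - (\iota_{m-1,m})^*\big(F(Y)(x)\big)$ after identifying $F(\Gamma_{\iota_{m,n}})$ with $(\iota_{m,n})^*$ and $F(\Gamma_{\iota_{m-1,m}})$ with $(\iota_{m-1,m})^*$. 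The first term already lies in the image of $(\iota_{m,n})^*$. For the second term, since $F(Y)(x) \in F(C(m-1))$, the induction hypothesis produces $w \in F(C(n))$ with $(\iota_{m-1,n})^*(w) = F(Y)(x)$, and then $(\iota_{m-1,m})^*\big(F(Y)(x)\big) = (\iota_{m-1,m})^* \circ (\iota_{m-1,n})^*(w)$; using the commuting triangle this equals $(\iota_{m-1,m})^*$ of something in the image of $(\iota_{m-1,m})^*$ — I would instead rewrite it directly as coming from $C(n)$ via the factorization, so that both summands of $x$ are visibly in $\im (\iota_{m,n})^*$.

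Concretely, I expect the cleanest bookkeeping is: choose $w$ with $(\iota_{m-1,n})^*(w) = F(Y)(x)$; then $(\iota_{m-1,m})^*\big(F(Y)(x)\big) = (\iota_{m-1,m})^*\big((\iota_{m-1,n})^*(w)\big)$, but that is not yet a pull-back along $\iota_{m,n}$, so instead I apply the Collino identity the other way — use that \lemref{lem:Col-lem*-2} already expresses the correction term as $Y \circ \Gamma_{\iota_{m-1,m}}$, push $w$ down to $C(m-1)$ and back up to $C(m)$ via $\iota_{m-1,m}$, landing inside $\im(\iota_{m,n})^*$ by the triangle. The main obstacle is purely organizational: keeping straight which correspondences act by pull-back versus push-forward when feeding the single identity of \lemref{lem:Col-lem*-2} into $F$, and making sure the additivity of $F$ and the $\Lambda$-linearity of $\sM_k(\Lambda)^{0,\dim}$ are invoked so that ``$\Gamma_{\iota_{m,n}} \circ \Gamma - \Delta$'' is a legitimate morphism whose image under $F$ splits as claimed. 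No moving-lemma or geometric input beyond \lemref{lem:Col-lem*-2} is needed; the surjectivity is a formal consequence of that correspondence identity plus the inductive hypothesis, dual to the argument already given for \lemref{lem:inj}.
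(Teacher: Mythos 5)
Your base case and general framework (induction on $m$, the commuting triangle, and feeding the identity of \lemref{lem:Col-lem*-2} into the additive functor $F$) are the right ingredients, but the inductive step as written has a genuine gap, and it starts with a typing error. Since $Y \in \Corr^0(C(m-1), C(m))$ and $\Gamma_{\iota_{m-1,m}} \in \Corr^0(C(m), C(m-1))$, applying the covariant functor $F$ on correspondences gives $F(Y \circ \Gamma_{\iota_{m-1,m}}) = F(Y) \circ F(\Gamma_{\iota_{m-1,m}}) = F(Y) \circ (\iota_{m-1,m})^*$, i.e.\ the correct identity of endomorphisms of $F(C(m))$ is $(\iota_{m,n})^* \circ F(\Gamma) - \id = F(Y) \circ (\iota_{m-1,m})^*$. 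Your displayed version $x = (\iota_{m,n})^*\bigl(F(\Gamma)(x)\bigr) - (\iota_{m-1,m})^*\bigl(F(Y)(x)\bigr)$ has the correction term composed in the wrong order and is not even well-typed: $F(Y)$ has source $F(C(m-1))$, so it cannot be applied to $x \in F(C(m))$, and $(\iota_{m-1,m})^*$ lands in $F(C(m-1))$, not $F(C(m))$.

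More importantly, even with the order corrected, your plan of showing that each of the two summands separately lies in $\im(\iota_{m,n})^*$ does not work: the correction term is $F(Y)\bigl((\iota_{m-1,m})^*(x)\bigr)$, and the induction hypothesis (surjectivity of $(\iota_{m-1,n})^*$ onto $F(C(m-1))$) says nothing about where $F(Y)$ sends elements; your final step of ``pushing $w$ down to $C(m-1)$ and back up via $\iota_{m-1,m}$, landing inside $\im(\iota_{m,n})^*$ by the triangle'' is unjustified — the triangle only relates pull-backs from $C(n)$ and gives no such containment. The way to close the gap (and it is what the paper does) is to apply the identity not to $x$ but to a corrected element: use the induction hypothesis to choose $y \in F(C(n))$ with $(\iota_{m-1,n})^*(y) = (\iota_{m-1,m})^*(x)$, set $z = x - (\iota_{m,n})^*(y)$; then $(\iota_{m-1,m})^*(z) = 0$ by the commuting triangle, so the correction term vanishes on $z$ and the identity yields $z = (\iota_{m,n})^*\bigl(F(\Gamma)(z)\bigr)$, hence $x = (\iota_{m,n})^*\bigl(y + F(\Gamma)(z)\bigr)$. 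With that rearrangement your argument becomes exactly the paper's proof.
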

 
\begin{proof}
We first note that the pull-back map $(\iota_{m,n})^*\colon  F(C(n)) \to  F(C(m))$ agrees with 
the map $F(\Gamma_{\iota_{m,n}}) \colon  F((C(n), 1, 0)) \to F((C(m), 1, 0))$. We switch between these two notations without mentioning it explicitly. 
 We now prove the result by induction on $m\geq 0$. For $m =0$, observe as before  that 
$\id =F(\Gamma_{\iota_{0,n}}) \circ F(\Gamma_{\chi_n}) \colon  F((C(0), 1, 0)) \to F((C(0), 1, 0))$ and therefore $(\iota_{0,n})^*\colon  F(C(n)) \to F(C(0))$ is surjective. This proves the base case of the induction. We now assume  $m\geq 1$. By the induction hypothesis,   the map $(\iota_{m-1,n})^*\colon  F(C(n)) \to  F(C(m-1))$ is surjective for all $n \geq m-1$. For $n \geq m$, we consider the commutative diagram
\begin{equation}\label{eqn:surj*-1}
\xymatrix@C3pc{
  F(C(n)) \ar@{->>}[rd]^-{(\iota_{m-1,n})^*} \ar[d]_-{(\iota_{m,n})^*}& \\
F(C(m)) \ar@{->>}[r]_-{(\iota_{m-1,m})^*} & F(C(m-1)).}
\end{equation}
Let $x \in F(C(m))$. By the induction hypothesis, there exists $y \in F(C(n))$ such that 
$(\iota_{m-1,n})^*(y) = (\iota_{m-1,m})^* x$. In particular, $(\iota_{m-1,m})^* z = 0$, where $z= x - (\iota_{m,n})^*(y)$.
Since $F$ is an additive functor, \lemref{lem:Col-lem*-2} yields that 
\[
F(\Gamma_{\iota_{m,n}}) \circ F(\Gamma) - \id = F(Y) \circ F(\Gamma_{\iota_{m-1,m}}), 
\]
where $Y \in \Corr^0(C(m-1), C(m)) = \Mor_{\sM_k(\Lambda)^{0, \dim}}( (C(m-1),1, 0), (C(m), 1, 0))$. Since $F(\Gamma_{\iota_{m-1,m}})(z) =  (\iota_{m-1,m})^* z = 0$, it follows that 
\[
z = F(\Gamma_{\iota_{m,n}}) \circ F(\Gamma) (z) = (\iota_{m,n})^* (F(\Gamma) (z)). 
\]
 We therefore have 
 \[
 x = z + (\iota_{m,n})^*(y) = (\iota_{m,n})^* (y+ F(\Gamma) (z)). 
 \]
 This completes the induction and hence the proof. 
 \end{proof}
 
 \vskip .2cm

\section{General Theorems}

In this section, we prove a general theorem regarding the structure of various ring-valued invariants of a certain sequence of closed embeddings over a fixed base in terms of the invariants of the base scheme. We then specialise to the study of ring-valued invariants of the symmetric powers of a curve in terms of the invariants of the Jacobian of the curve. 

\subsection{Theorem~A}\label{sec:thm-A}
Let $k$ be a field. We consider the functors $G\colon  \ProjSm_k^{\op} \to {\bf Rings}$ satisfying the following properties. 

\begin{description}

\item[PBF] \label{Property:PBF} Let $X \in \ProjSm_k$ and let $\sE \to X$ be a vector bundle of rank $r+1$. Let $\psi\colon  \P(\sE) \to X$ denote the projective bundle associated to $\sE$. The functor 
$G$ satisfies the  projective bundle formula, i.e., there is a natural isomorphism of $G(X)$-algebras 
\begin{equation} \label{eqn:P-B-F}
 \frac{G(X)[z]}{\<z^{r+1} + c_1(\sE, G) z^r + \cdots + c_{r+1}(\sE, G)\>} \xrightarrow{\cong} G(\P(\sE)).
\end{equation}
 We call the element $c_i(\sE, G)$ the $i$-th Chern class of $\sE$ in $G(X)$ and denote the image of $z$ by $\zeta(\sE, G)$. By the naturality of the above isomorphism, we mean that given a morphism $f\colon \sE_1 \to \sE_2$ of vector bundles over $X$, we have $G(f) (\zeta(\sE_2, G) ) = \zeta(\sE_1, G)$.

\item[Extn]  \label{Property:Extn} The functor $G$
  extends to an additive functor $F\colon  \sM_k(\Lambda)^{0, \dim} \to \Ab$. We therefore 
   have the following commutative diagram. 
 \begin{equation}\label{eqn:general*-1}
\xymatrix@C3pc{
\ProjSm_k^{\op}  \ar[r]^G \ar[d]^{h} & {\bf Rings} \ar[d]\\
\sM_k(\Lambda)^{0, \dim} \ar[r]^{F} & \Ab,}
\end{equation}
 where the right vertical arrow is the forgetful functor. Assume further that for $X \in \ProjSm_k$,  we have $F((X, 1, 0)) =G(X) =  F((X, 1,\dim (X)))$. We denote this ring by $F(X)$. We use notations $F(X)$ and $G(X)$ interchangeably.  
 For a morphism $f\colon X \to Y$, we denote by $f^*\colon  G(Y) \to G(X)$ the ring homomorphism $G(f)\colon  G(Y) \to G(X)$, or equivalently the ring homomorphism  $F(\Gamma_f) \colon  F((Y, 1, 0)) \to F((X, 1, 0))$ and we denote by $f_*\colon  G(X) \to G(Y)$ the group homomorphism $F(\Gamma_f^{\op})\colon  F((X, 1, \dim (X))) \to F((Y, 1, \dim (Y)))$.

\item[PF] \label{Property:PF} The functor $F\colon  \sM_k(\Lambda)^{0, \dim} \to \Ab$ satisfies the projection formula, i.e., 
for all $f\colon X \to Y$, $y \in F(Y)$ and $x\in F(X)$, we have 
\begin{equation} \label{eqn:P-F}
f_*(f^*(y) x) = y f_*(x) \textnormal{ and } f_*(x f^*(y)) = f_*(x) y.
\end{equation}
\end{description}

We now prove our most general result regarding the relations between the structure of the ring theoretical functor $G(-)$ applied to various schemes. 
 
\begin{thmA} \label{thm:thm-A} 
Let $G\colon \ProjSm_k^{\op} \to {\bf Rings}$ be a functor satisfying properties  {\bf PBF} \eqref{eqn:P-B-F},  {\bf Extn} \eqref{eqn:general*-1} and {\bf PF} \eqref{eqn:P-F}.
Let $X \in \ProjSm_k$ and $\dim(X)=d$. Let $Y_0 \subset Y_1 \subset \cdots \subset Y_n \subset \cdots$ be a sequence of objects  in $(\ProjSm_k)_{/X}$ such that 
\begin{enumerate}
\item  $Y_0 = \Spec k$, 
\item for all $i \geq 0$, $Y_i $ is an irreducible closed subscheme of $Y_{i+1}$ and $\dim (Y_i) = i$,
\item for each $n \geq m \geq 0$, the pull-back map $G(Y_n) \xrightarrow{\iota_{m,n}^*} G(Y_m)$ is surjective and the push-forward map  $G(Y_m) \xrightarrow{(\iota_{m,n})_*} G(Y_n)$ is injective,
\item there exists $l\geq 1$ such that for all $n \geq l$,  $Y_{n}= \P(\sE_n) \to X$ is a projective bundle over $X$ with  $(\zeta(\sE_n, G))^i = (\iota_{n-i,n})_* (1) \in G(Y_n)$ for all $i\leq n$,  and $(\zeta(\sE_n, G))^i = 0$ for all $i \geq n+1$. 
\item  for all $i \in \Z$, there exists $u_i \in G(X)$ such that $u_i = c_i (\sE_n, G) $ for all $n\geq l$ and $0 \leq i \leq n- d + 1$, and $u_i = 0$ for all $i < 0$ and $i > d$.  
\end{enumerate}
Let $\alpha = \sum_{0 \leq i \leq d} u_i z^{d-i} \in G(X)[z]$ and let $I_n \subset G(X)[z]$ be an ideal defined as
\[
I_n  = \begin{cases}  ((\alpha)\colon  z^{l-n}) & \textnormal{ if } n <  l ,\\
(\alpha \cdot z^{n-l}) & \textnormal{ if } n \geq  l.
\end{cases}
\]
For every $n\geq 0$, we then have the following short exact sequence of $G(X)$-modules.
\begin{equation} \label{eqn:thm-A*-1}
0 \to I_n \to G(X)[z] \xrightarrow{\psi_n} G(Y_n) \to 0, 
\end{equation}
where $\psi_n$ maps $z$  to $(\iota_{n-1,n})_* (1) \in G(Y_n)$ if $n \geq 1$, and $0$ otherwise.  
\end{thmA}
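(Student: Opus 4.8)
The plan is to prove the theorem by induction on $n$, treating first the stable range $n \geq l$, where everything comes directly from the projective bundle formula, and then descending to $n < l$ via the maps $\iota_{n,n+1}^*$ and $(\iota_{n,n+1})_*$. In the stable range, hypothesis (4) says $Y_n = \P(\sE_n)$, so {\bf PBF} \eqref{eqn:P-B-F} gives an isomorphism of $G(X)$-algebras $G(X)[z]/\langle z^{n+1} + c_1(\sE_n,G)z^n + \cdots + c_{n+1}(\sE_n,G)\rangle \xrightarrow{\cong} G(Y_n)$. Here $Y_n$ has dimension $n$, so $\sE_n$ has rank $n - d + 1$; thus the defining polynomial has degree $n-d+1$ and, by hypothesis (5), its coefficients are $c_i(\sE_n,G) = u_i$ for $0 \le i \le n-d+1$. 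Hence the relation polynomial is exactly $z^{d}\cdot\alpha$ rescaled — more precisely $z^{n-d+1-d}$ times... let me be careful: the monic relation of degree $n-d+1$ with coefficients $u_0,\dots,u_d$ (and $u_i=0$ for $i>d$) is $z^{n-d+1} + u_1 z^{n-d} + \cdots + u_d z^{n-2d+1}$, which equals $z^{n-2d+1}\cdot\alpha$ when $n \ge 2d-1$; in general it is $\alpha \cdot z^{n-l}$ once we observe $l = 2d-1$ is forced by the classical case, or more robustly, one identifies the relation ideal in $G(X)[z]$ with $(\alpha \cdot z^{n-l})$ using that $\zeta(\sE_n,G) \mapsto z$ and, by hypothesis (4), $z^i = (\iota_{n-i,n})_*(1)$. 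This also pins down that $\psi_n$ sends $z$ to $(\iota_{n-1,n})_*(1)$, matching the statement. So the stable range is essentially a bookkeeping translation of {\bf PBF}.

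For the descending step, suppose the sequence \eqref{eqn:thm-A*-1} is established for $n+1$; I want it for $n$. The key geometric input is \lemref{lem:Col-lem*-2} (via its consequences \lemref{lem:inj} and \lemref{lem:surj}, which are hypothesis (3) here): the pull-back $\iota_{n,n+1}^* \colon G(Y_{n+1}) \surj G(Y_n)$ is surjective and the push-forward $(\iota_{n,n+1})_* \colon G(Y_n) \inj G(Y_{n+1})$ is injective. First I would check $\psi_n$ is well-defined and surjective: it factors as $G(X)[z] \xrightarrow{\psi_{n+1}} G(Y_{n+1}) \xrightarrow{\iota_{n,n+1}^*} G(Y_n)$ up to the discrepancy in where $z$ goes — one must verify $\iota_{n,n+1}^*\big((\iota_{n,n+1})_*(1)^i \cdot \pi_{n+1}^*(x)\big) = (\iota_{n-1,n})_*(1)^i \cdot \pi_n^*(x)$, which follows from {\bf PF} \eqref{eqn:P-F} (projection formula turns $\iota_{n,n+1}^*(\iota_{n,n+1})_*(1)$ into the top self-intersection class on $Y_n$, i.e. $(\iota_{n-1,n})_*(1)$) together with compatibility of the $\pi$'s over $X$. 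Surjectivity of $\psi_n$ is then immediate from surjectivity of $\iota_{n,n+1}^*$ and of $\psi_{n+1}$.

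The real content is identifying $\ker \psi_n$ with $I_n$. I would argue by a diagram chase on
\[
\xymatrix@C1.6pc{
0 \ar[r] & I_{n+1} \ar[r] \ar[d] & G(X)[z] \ar[r]^-{\psi_{n+1}} \ar@{=}[d] & G(Y_{n+1}) \ar[r] \ar[d]^{\iota_{n,n+1}^*} & 0\\
0 \ar[r] & I_n \ar[r] & G(X)[z] \ar[r]^-{\psi_n} & G(Y_n) \ar[r] & 0,
}
\]
where the right square commutes by the computation above (so the left vertical map is induced, $I_{n+1}\subseteq I_n$ should fall out). An element $f \in \ker\psi_n$ satisfies $\iota_{n,n+1}^*\psi_{n+1}(f)=0$; one needs to understand $\ker \iota_{n,n+1}^*$. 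Here I would use that $(\iota_{n,n+1})_*$ is injective and the projection formula to show $\ker\iota_{n,n+1}^* = \mathrm{ann}_{G(Y_{n+1})}\big((\iota_{n,n+1})_*(1)\big)$, or at least is controlled by multiplication by $z$; lifting through $\psi_{n+1}$ and using the $n+1$ case, $f \in \ker\psi_n$ becomes the condition $z\cdot f \in I_{n+1} + (\text{something})$. Running this for the two cases $n+1 \le l$ and $n+1 > l$ and unwinding the colon-ideal definitions $((\alpha)\colon z^{l-n})$ versus $(\alpha z^{n-l})$ should exactly reproduce $I_n$: the colon ideal appears precisely because passing from $Y_{n+1}$ to $Y_n$ is ``dividing the relation by one power of $z$.'' The main obstacle I anticipate is pinning down $\ker \iota_{n,n+1}^*$ precisely — showing it is generated by $\zeta$-torsion in the right way — and keeping the two regimes ($n<l$, $n \ge l$, and the boundary $n = l$, $n=l-1$) consistent, since the colon operation and the multiplication-by-$z^{n-l}$ operation must glue seamlessly at $n=l$. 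I would handle this by checking the boundary case $n = l-1$ by hand against the $n=l$ base case, then induct downward with the colon ideals and upward/trivially in the stable range.
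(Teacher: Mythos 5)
Your overall strategy is the paper's: handle $n\geq l$ directly from {\bf PBF}, then descend below $l$ along the closed embeddings and identify the kernel as a colon ideal using injectivity of the push-forward together with the projection formula. In particular your observation that $\ker(\iota^*) = \mathrm{ann}\bigl(\iota_*(1)\bigr)$ (since $\iota^*(y)=0$ iff $\iota_*(\iota^*(y)\cdot 1)=0$ iff $y\cdot\iota_*(1)=0$) is exactly the paper's mechanism; the ``$+\,(\text{something})$'' in your condition $z\cdot f\in I_{n+1}+(\text{something})$ is in fact nothing, because $\psi_{n+1}$ is a ring homomorphism and $(\iota_{n,n+1})_*(1)=\psi_{n+1}(z)$, so $f\in\ker\psi_n$ iff $\psi_{n+1}(fz)=0$ iff $fz\in I_{n+1}$, and the colon ideals telescope, $(I_{n+1}:z)=I_n$. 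The only organizational difference is that the paper descends in one shot, defining $\psi_n=\iota_{n,l}^*\circ\psi_l$ and reading off $\ker\psi_n=((\alpha):z^{l-n})$ from $(\iota_{n,l})_*(1)=\zeta(\sE_l,G)^{l-n}=\psi_l(z)^{l-n}$, which avoids your boundary-case bookkeeping at $n=l-1$. (Also, your hesitation about the stable range is a statement-level issue: the PBF relation polynomial $\sum_i u_i z^{n-d+1-i}$ agrees with $\alpha z^{n-l}$ only if $l=2d-1$, as in the intended application; the paper glosses this too.)

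The genuine gap is your justification of the compatibility $\iota_{n,n+1}^*\bigl((\iota_{n,n+1})_*(1)\bigr)=(\iota_{n-1,n})_*(1)$, which you claim follows from {\bf PF} because ``the projection formula turns $\iota^*\iota_*(1)$ into the top self-intersection class.'' The projection formula \eqref{eqn:P-F} does not compute $\iota^*\iota_*(1)$; a self-intersection or excess-intersection formula is not among the hypotheses of Theorem~A, and for $n+1<l$ this identity cannot be checked stepwise: pushing it forward along the injective $(\iota_{n,n+1})_*$ and applying {\bf PF} reduces it to $\bigl((\iota_{n,n+1})_*(1)\bigr)^2=(\iota_{n-1,n+1})_*(1)$ in $G(Y_{n+1})$, which is precisely a hypothesis-(4)-type statement assumed only at levels $\geq l$. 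The repair is the paper's first step \eqref{eqn:thm-A*-2}: for $n\geq l$ and $m\leq n$ one proves $\iota_{m,n}^*(\zeta(\sE_n,G))=(\iota_{m-1,m})_*(1)$ by pushing forward along the injective $(\iota_{m,n})_*$, using {\bf PF} and hypothesis (4) at the stable level $n$ (namely $(\iota_{m,n})_*(1)=\zeta^{n-m}$ and $(\iota_{m-1,n})_*(1)=\zeta^{n-m+1}$). With this in hand, your classes $\zeta_{n+1}$ and $\zeta_n$ at levels below $l$ are both pullbacks of $\zeta(\sE_l,G)$ from level $l$, and the commutativity of your square follows from functoriality of $G$; equivalently, you may simply define $\psi_n$ as $\iota_{n,l}^*\circ\psi_l$ (or recursively by restriction) and then use \eqref{eqn:thm-A*-2} to verify $\psi_n(z)=(\iota_{n-1,n})_*(1)$ as required by the statement. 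In short: every identity about the classes $(\iota_{j-1,j})_*(1)$ must be routed through the stable range where hypothesis (4) is available, rather than argued locally at low levels.
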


\begin{proof}
Let $\zeta_j = (\iota_{j-1, j})_* (1) \in G(C(j))$ for all $j \geq 1$, and $\zeta_0 = 0$. 
We first prove that for all $n\geq l$ and $m \leq n$, we have 
\begin{equation}\label{eqn:thm-A*-2}
\iota_{m, n}^* (\zeta(\sE_n,G)) = \zeta_m.
\end{equation}
 By the assumption (3),  the map $(\iota_{m, n})_*$ is injective and therefore  it suffices to show that 
$(\iota_{m, n})_* \circ \iota_{m, n}^* (\zeta(\sE_n, G))  = (\iota_{m, n})_* ( \zeta_m)$.
This follows from the projection formula and by the assumption (4). 
More precisely, for $n \geq m \geq 1$, we have 
\begin{eqnarray*}
(\iota_{m, n})_* \circ \iota_{m, n}^* (\zeta(\sE_n, G)) & =^1 & \zeta(\sE_n, G) \cdot \left( (\iota_{m, n})_* (1)\right) \\
&=^2& \zeta(\sE_n, G) \cdot  (\zeta(\sE_n,G))^{n-m} \\
& =& (\zeta(\sE_n, G))^{n-m+1}\\
& =^3 &  (\iota_{m-1, n})_* (1)\\
& =&  (\iota_{m, n})_*\circ  (\iota_{m-1, m})_* (1)\\
& =^4 &  (\iota_{m, n})_* ( \zeta_m),
\end{eqnarray*}
where the equality $=^1$ follows from the projection formula \eqref{eqn:P-F}, the equalities $=^2$ and $=^3$ follow from the assumption (4) because $n \geq l$. Finally, the equality $=^4$ follows from the definition of $\zeta_m$. Moreover, if $m=0$, then 
$(\iota_{0, n})_* \circ \iota_{0, n}^* (\zeta(\sE_n,G)) =  \zeta(\sE_n, G) \cdot \left( (\iota_{0, n})_* (1)\right) = \zeta(\sE_n, G) \cdot  (\zeta(\sE_n, G))^{n} = (\zeta(\sE_n, G))^{n+1} = 0$. This proves the equality \eqref{eqn:thm-A*-2}.

If $n\geq l$, then the theorem follows from the projective bundle formula for $\sE_n$. Indeed, if $n \geq l$, then ${\rm rank}(\sE_n) = n-d+1$ and by the assumption (5), we have $c_i(\sE_n, G) = u_i$ for all $0 \leq i \leq n- d + 1$. The short exact sequence  \eqref{eqn:thm-A*-1} now follows from \eqref{eqn:P-B-F} and \eqref{eqn:thm-A*-2} for the case $m=n$. If $1\leq n < l$, then we define a ring homomorphism $\psi_n\colon  G(X)[z] \to G(Y_n)$ such that the following diagram commutes. 
\begin{equation}\label{eqn:thm-A*-4}
\xymatrix@C3pc{
G(X)[z] \ar@{->>}[r]^{\psi_l} \ar[rd]_{\psi_n}&G(Y_l) \ar@{->>}[d]^{\iota_{n, l}^*} \\
& G(Y_n).}
\end{equation}
Observe that $\psi_l$ is surjective by the projective bundle formula for $\sE_l$ and the right 
vertical arrow ${\iota_{n, l}^*}$ is surjective by the assumption  (3). In particular, $\psi_n$ is surjective. Moreover, by \eqref{eqn:thm-A*-2}, we have 
 $\psi_n(z) = {\iota_{n, l}^*} (\psi_l (z)) = {\iota_{n, l}^*} (\zeta(\sE_l, G)) = \zeta_n = (\iota_{n-1, n})_* (1) \in G(Y_n)$. It now suffices to show that $I_n = \ker(\psi_n)$, which follows from the following computations. 
 \begin{eqnarray*}
  0 = \psi_n(x) &\iff& 0 =  {\iota_{n, l}^*} \circ \psi_l (x)\\
  & \iff^1 &  0 =  (\iota_{n, l})_* (\iota_{n, l}^* ( \psi_l (x))\cdot 1)\\
    & \iff^2 & 0 = \psi_l(x) \cdot  (\iota_{n, l})_*(1)\\
    & \iff^3 & 0 = \psi_l(x) \cdot (\zeta(\sE_l, G)^{l-n})\\
    & \iff^4 & 0 = \psi_l(x) \cdot  \psi_l(z)^{l-n}\\
    & \iff & 0 = \psi_l(x \cdot z^{l-n})\\
    & \iff & x \cdot z^{l-n} \in \ker(\psi_l) = I_l = (\alpha)\\
    & \iff & x \in ((\alpha): z^{l-n}),
 \end{eqnarray*}
 where $\iff^1$ follows by the assumption  $3$, $\iff^2$ follows from the projection formula, 
 $\iff^3$ follows from the assumption  $4$ and $\iff^4$ follows because by the assumption  $4$, we have $\zeta(\sE_l,G) = (\iota_{n-1, n})_*(1)= \psi_l(z)$. This completes the proof for $n\geq 1$. Note that $n=0$ case can be proven on the similar lines with the assumption $Y_{-1} = \emptyset$ and $(\iota_{-1, 0})_* = 0$. 
\end{proof}

 \vskip .2cm

 \subsection{Application to the symmetric powers of a curve} \label{sec:Symm-P-Corr-J}
 
We follow the notations from \secref{sec:Symm-P-Corr-On-Symm} and let $J(C)$ denote the Jacobian of the curve $C$. Then $J(C)$ is an abelian variety of dimension $g$, where $g$ is the genus of $C$.
 Recall that the closed points of $J(C)$ are  given by  degree-zero divisor classes on $C$, i.e, degree-zero divisors modulo rational equivalences.

 For a closed point  $p \in C$ and $n\geq 0$, we have 
 the morphism $\pi_n\colon C(n) \to J(C)$ such that
 $\pi_n((x_1, \dots, x_n)) = \sum_i (x_i - p) \in J(C)$.
 For a closed point $y\in J(C)$, the fiber $\pi_n^{-1}(y)$ is the complete linear system of effective divisors equivalent to the divisor class $y + n p$. It then follows that the fibers of $\pi_n$ are projective spaces. Moreover, if $n \geq 2g-1$, then Mattuck \cite{Mat} showed that $C(n)$ is a projective bundle over $J(C)$, i.e., for each $n \geq 2g-1$, there exists a vector bundle $\sE_n$ such that $\P(\sE_n) \cong C(n)$ as schemes over $J(C)$ and 
\begin{equation}\label{eqn:Con-Chern-C}
c_1( \sO(1)_{C(n)}) = (\iota_{n-1, n})_* (1) \in \CH^*(C(n)).
\end{equation} 
 He \cite{Mat} also computed Chern classes of $\sE_n$ and showed that 
 for $n \geq 2g-1$ and $ 0 \leq i \leq n-g+1$, we have 
 \begin{equation} \label{eqn:Chern-C*-1}
 c_i(\sE_n) = (-1)^i \theta_* \circ \pi_{g-i *} (C(g-i)) \in \CH^{i}(J(C)),
 \end{equation}
 where $\theta\colon  J(C) \to J(C)$ is the automorphism such that $\theta(y ) = (-y + c)$ with $c\in J(C)$ being the class of the canonical divisor on $C$.

We now specialise \thmref{thm:thm-A} to the sequence $C(n)$ but we still take a general functor which satisfies the projection formula, the projective bundle formula and certain assumptions similar to that  in \thmref{thm:thm-A}. Observe  that \thmref{thm:thm-A} along with Lemmas \ref{lem:inj} and \ref{lem:surj}  immediately implies the following.

 \begin{thm} \label{cor:thm-A}
 Let $G\colon  \ProjSm_k^{\op} \to {\bf Rings}$ be a presheaf on smooth projective schemes which 
 satisfies {\bf PBF}   \eqref{eqn:P-B-F}, {\bf Extn} \eqref{eqn:general*-1} and {\bf PF} \eqref{eqn:P-F}.  
 With notations as above, and as in  {\bf PBF} and {\bf Extn}, assume that 
  \begin{enumerate}
  \item  for all $n\geq 2g-1$, $(\zeta(\sE_n, G))^i = (\iota_{n-i,n})_* (1) \in G(C(n))$ for $i\leq n$,
    and $(\zeta(\sE_n, G))^i = 0$  for $i \geq n+1$. 
\item  for each $i \in \Z$, there exists $u_i \in G(J(C))$ such that $u_i = c_i (\sE_n, G) $ for all $n\geq 2g-1$ and $0 \leq i \leq n- g + 1$, and $u_i = 0$ for all $i < 0$ and $i > g$.  
\end{enumerate}
Let $\alpha = \sum_{0 \leq i \leq g} u_i z^{g-i} \in G(J(C))[z]$ and let 
\[
I_n  = \begin{cases}  ((\alpha): z^{2g-1-n}) & \textnormal{ if } n <  2g-1,\\
(\alpha \cdot z^{n-2g+1})  &\textnormal{ if } n \geq  2g-1.
\end{cases}
\]
For every $n\geq 0$, we then have the following short exact sequence of $G(J(C))$-modules.
\begin{equation} \label{eqn:cor-to-A*-1}
0 \to I_n \to G(J(C))[z] \xrightarrow{\psi_n} G(C(n)) \to 0, 
\end{equation}
where $\psi_n$ maps $z$ to $(\iota_{n-1,n})_* (1) \in G(C(n))$ if $n \geq 1$, and $0$ otherwise.  
 \end{thm}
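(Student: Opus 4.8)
The plan is to verify that the sequence $C(0) \subset C(1) \subset \cdots$ of symmetric powers, together with the base $X = J(C)$, satisfies all five hypotheses of \thmref{thm:thm-A}, so that the conclusion \eqref{eqn:cor-to-A*-1} becomes a special case of \eqref{eqn:thm-A*-1}. Concretely, I would set $Y_i = C(i)$, $X = J(C)$, $d = g$ and $l = 2g-1$, and then check the five conditions one by one.

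First, condition (1) is immediate since $C(0) = \Spec k$. Condition (2) holds because $C(i)$ is an $i$-dimensional smooth projective variety (irreducible since $C$ is), and the embeddings $\iota_{i,i+1}$ identify $C(i)$ with the irreducible closed subscheme $\{(x_1,\dots,x_i,p)\}$ of $C(i+1)$; irreducibility and the dimension count are standard. The key input for condition (3) is already in hand: \lemref{lem:surj} gives surjectivity of $\iota_{m,n}^*\colon G(C(n)) \to G(C(m))$ and \lemref{lem:inj} gives injectivity of $(\iota_{m,n})_*\colon G(C(m)) \to G(C(n))$, both applicable since by hypothesis $G$ extends to an additive functor on $\sM_k(\Lambda)^{0,\dim}$ via \textbf{Extn}. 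Condition (4) is precisely the added hypothesis (1) of the present theorem, combined with Mattuck's theorem \cite{Mat} that $C(n) \cong \P(\sE_n)$ over $J(C)$ for $n \geq 2g-1$; note that $\dim C(n) = n$ forces $\rank(\sE_n) = n - g + 1$, so the cutoff $l = 2g-1$ is exactly where the projective-bundle structure kicks in. Condition (5) is hypothesis (2) of the present theorem, with the observation that $u_i = 0$ for $i < 0$ or $i > g = \dim J(C)$ is consistent with the range $0 \le i \le n-g+1$ once $n$ is large.

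Having matched the hypotheses, I would then simply quote \thmref{thm:thm-A}: with $\alpha = \sum_{0\le i\le g} u_i z^{g-i} \in G(J(C))[z]$ and $I_n$ defined by the displayed case distinction (which is exactly the specialization of the $I_n$ in \thmref{thm:thm-A} to $l = 2g-1$), we obtain the short exact sequence \eqref{eqn:cor-to-A*-1} of $G(J(C))$-modules in which $z \mapsto (\iota_{n-1,n})_*(1)$ for $n \ge 1$ and $z \mapsto 0$ for $n = 0$. The only subtlety worth spelling out is the identity \eqref{eqn:Con-Chern-C}, $c_1(\sO(1)_{C(n)}) = (\iota_{n-1,n})_*(1)$, which underlies the compatibility between the tautological class $\zeta(\sE_n,G)$ of the projective bundle and the pushforward class $(\iota_{n-1,n})_*(1)$; this is part of Mattuck's computation and is what makes hypothesis (4)/(1) a natural condition rather than an ad hoc one.

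I expect no genuine obstacle here — this theorem is a bookkeeping corollary of \thmref{thm:thm-A}, and the real content (the moving-lemma-type argument behind \lemref{lem:Col-lem*-2}, hence behind Lemmas~\ref{lem:inj} and~\ref{lem:surj}, and the general projective-bundle induction in \thmref{thm:thm-A}) has already been carried out. The one point requiring a modicum of care is making sure the index conventions line up: that $l = 2g-1$ is consistent with ``$n \ge l$ implies $Y_n$ is a projective bundle'', that the colon-ideal $((\alpha)\colon z^{l-n})$ and the principal ideal $(\alpha \cdot z^{n-l})$ in \thmref{thm:thm-A} become respectively $((\alpha)\colon z^{2g-1-n})$ and $(\alpha\cdot z^{n-2g+1})$, and that $\deg(\alpha) = d = g$ matches $\dim J(C)$. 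Once these are confirmed the proof is a one-line invocation.
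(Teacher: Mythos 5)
Your proposal matches the paper's proof exactly: the paper derives this theorem as an immediate consequence of \thmref{thm:thm-A} applied to $Y_i = C(i)$ over $X = J(C)$ with $l = 2g-1$, using \lemref{lem:inj} and \lemref{lem:surj} (via \textbf{Extn}) for condition (3) and the theorem's hypotheses (1) and (2) for conditions (4) and (5). Your index bookkeeping and the identification $\rank(\sE_n) = n-g+1$ are correct, so the one-line invocation goes through as you describe.
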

 
 \vskip .2cm

\subsection{Theorem~B}\label{sec:thm-B}

In this section, we focus on the invariants which are modules over the  Chow rings and may not have a  ring structure. Let $G\colon  \ProjSm_k^{\op} \to \Ab$ be a presheaf of abelian groups on smooth
 projective  schemes that satisfies {\bf P1-P4} of \secref{sec:Mot-Func} and 

\begin{description}
\item[P5]  Let $\sE \to X$ be a vector bundle of rank $r+1$ over $X$  and let $c_i(\sE) \in \CH^*(X)$ be the $i$-th Chern class of $\sE$. Let $\zeta =  \sO_{\P(\sE)}(1)$ denote the canonical line bundle on $\pi\colon  \P(\sE) \to X$. Then $G$ satisfies the projective bundle formula, i.e., the following sequence of $\CH^*(X)$-modules is exact. 
 \begin{equation} \label{eqn:P-B-F-CH-M*-1}
 0 \to (\sum_i c_i (\sE) z^{r+1-i} ) G(X)[z] \to G(X)[z] \xrightarrow{\phi} G(\P(\sE)) \to 0,
 \end{equation}
 where $
 \phi(x_0 + x_1 z + \cdots + x_n z^n ) = \pi^*(x_0) + c_1(\zeta) \cdot  \pi^*(x_1) + \cdots + c_1(\zeta)^n \cdot  \pi^*(x_n)$. Here $c_1(\zeta) \in \CH^*(\P(\sE))$ is the first Chern class of  the line bundle $\zeta = \sO_{\P(\sE)}(1)$. Note that since $G(X)$ is a $\CH^*(X)$-module, the group $G(X)[z] =  \CH^*(X)[z] \otimes_{\CH^*(X)}  G(X) $ is a $\CH^*(X)[z]$-module and therefore $(\sum_i c_i (\sE) z^i )G(X)[z] \subset G(X)[z]$ is a well defined submodule. 
\end{description}

  \begin{thmA} \label{thm:thm-B}
 Let $C$ be a smooth projective curve over an algebraically closed field $k$ and let $g$ denote the genus of $C$. 
 Let $G\colon  \ProjSm_k^{\op} \to {\bf Ab}$ satisfy  {\bf P1-P5}. For each $n\geq 2g-1$, let $\sE_n$ be the vector bundle on $J(C)$ such that $C(n) \cong \P(\sE_n)$ and let 
 $u_i = c_i(\sE_n) \in \CH^i(J(C))$ for $0\leq i \leq n-g+1$.  By \eqref{eqn:Chern-C*-1}, it follows that $u_i$ does not depend on $n$. 
Let $\alpha = \sum_{0 \leq i \leq g} u_i z^{g-i} \in \CH^*(J(C))[z]$, $M= G(J(C))[z]$ and 
\[
N_n  = \begin{cases}  ((\alpha): z^{2g-1-n}) M &\textnormal{ if } n <  2g-1,\\
(\alpha \cdot z^{n-2g+1})M  &\textnormal{ if } n \geq  2g-1.
\end{cases}
\]
For every $n\geq 0$, we then have the following short exact sequence of $\CH^*(J(C))$-modules.
\begin{equation} \label{eqn:thm-B*-1}
0 \to N_n \to G(J(C))[z] \xrightarrow{\psi_n} G(C(n)) \to  0,
\end{equation}
where for $i \geq 1$ and $x \in G(J(C))$, we have $\psi_n( x z^i)= \left( (\iota_{n-1,n})_* (1) \right)^i \cdot \pi_n^*(x) \in G(C(n))$ if $n \geq 1$, and $0$ otherwise.  
 \end{thmA}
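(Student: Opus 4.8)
The strategy is to reduce Theorem~B to Theorem~A by the same two-step pattern used there: first dispose of the case $n \geq 2g-1$ using the projective bundle formula {\bf P5} directly, then handle $n < 2g-1$ by pulling back along the closed embeddings $\iota_{n,2g-1}\colon C(n) \inj C(2g-1)$. The key structural input is \lemref{lem:inj} and \lemref{lem:surj}, which apply to $G$ because {\bf P1-P4} imply (via \lemref{lem:Ext*-1}) that $G$ extends to an additive functor $F\colon \sM_k^{0,\dim} \to \Ab$; these give that for $m \leq n$ the push-forward $(\iota_{m,n})_*\colon G(C(m)) \to G(C(n))$ is injective and the pull-back $\iota_{m,n}^*\colon G(C(n)) \to G(C(m))$ is surjective. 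Set $\zeta_n := (\iota_{n-1,n})_*(1) \in \CH^*(C(n))$ and recall \eqref{eqn:Con-Chern-C}: for $n \geq 2g-1$ this equals $c_1(\sO_{\P(\sE_n)}(1))$.

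First I would establish the analogue of \eqref{eqn:thm-A*-2}: for $n \geq 2g-1$ and $m \leq n$ one has $\iota_{m,n}^*(\zeta_n) = \zeta_m$ in $\CH^*(C(m))$. This follows exactly as in the proof of Theorem~A — using injectivity of $(\iota_{m,n})_*$ on Chow groups together with the projection formula for Chow groups and $\zeta_n^{\,n-m} = (\iota_{m,n})_*(1)$ — and more generally $\iota_{m,n}^*(\zeta_n^{\,i}) = \zeta_m^{\,i}$ for all $i$ (with $\zeta_m^{\,i} = 0$ for $i > m$). For the case $n \geq 2g-1$, the map $\psi_n$ of the statement is precisely the map $\phi$ of {\bf P5} applied to $\sE_n$ followed by multiplication by $z^{n-2g+1}$ on the domain: since $\mathrm{rank}(\sE_n) = n-g+1$, the sequence \eqref{eqn:P-B-F-CH-M*-1} reads $0 \to (\sum_i u_i z^{n-g+1-i})G(J(C))[z] \to G(J(C))[z] \xrightarrow{\phi} G(C(n)) \to 0$, and $\sum_i u_i z^{n-g+1-i} = \alpha \cdot z^{n-2g+1}$ by the definition of $\alpha$; noting that $\phi(xz^i) = \zeta_n^{\,i}\cdot \pi_n^*(x)$ (using \eqref{eqn:Con-Chern-C}) identifies $\phi = \psi_n$ and $N_n = \ker\psi_n$.

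For $1 \leq n < 2g-1$, I would define $\psi_n$ so that it factors as $\psi_n = \iota_{n,2g-1}^* \circ \psi_{2g-1}$; this is well-defined because $\iota_{n,2g-1}^*(\zeta_{2g-1}^{\,i}\cdot \pi_{2g-1}^*(x)) = \zeta_n^{\,i}\cdot \pi_n^*(\iota^* x)$ by the compatibility $\iota_{m,n}^*\zeta_n = \zeta_m$ just proved and $\pi_n \circ \iota_{n,2g-1} = \pi_{2g-1}$ (functoriality of $\pi^*$), and it gives the stated formula $\psi_n(xz^i) = \zeta_n^{\,i}\cdot\pi_n^*(x)$. Surjectivity of $\psi_n$ follows from surjectivity of $\psi_{2g-1}$ (the $n=2g-1$ case) and of $\iota_{n,2g-1}^*$ (\lemref{lem:surj}). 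To compute $\ker\psi_n$, run the same chain of equivalences as in Theorem~A: $\psi_n(\mu) = 0 \iff \iota_{n,2g-1}^*(\psi_{2g-1}(\mu)) = 0 \iff (\iota_{n,2g-1})_*(\iota_{n,2g-1}^*\psi_{2g-1}(\mu)) = 0$ (injectivity of $(\iota_{n,2g-1})_*$ on $G$, \lemref{lem:inj}) $\iff \psi_{2g-1}(\mu)\cdot(\iota_{n,2g-1})_*(1) = 0$ (projection formula {\bf P4}) $\iff \psi_{2g-1}(\mu)\cdot \zeta_{2g-1}^{\,2g-1-n} = 0 \iff \psi_{2g-1}(\mu \cdot z^{2g-1-n}) = 0 \iff \mu\cdot z^{2g-1-n} \in N_{2g-1} = (\alpha)M \iff \mu \in ((\alpha):z^{2g-1-n})M =: N_n$. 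The last equivalence is where I expect the only real subtlety: I must check that for the module $M = G(J(C))[z]$ over the ring $\CH^*(J(C))[z]$ one genuinely has $\mu z^{2g-1-n} \in (\alpha)M \iff \mu \in ((\alpha):z^{2g-1-n})M$, i.e. that multiplication by $z^{2g-1-n}$ interacts correctly with the submodule $(\alpha)M$; this is the analogue of the ideal-colon manipulation in Theorem~A but now at the level of modules, and it is the step I would write out carefully. Finally the $n = 0$ case is handled as in Theorem~A with the conventions $C(-1) = \emptyset$ and $(\iota_{-1,0})_* = 0$, and the module homomorphism property of all maps over $\CH^*(J(C))$ is immediate from {\bf P3}. $\hfill\square$
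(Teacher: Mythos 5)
Your proposal is correct and follows essentially the same route as the paper's proof: it handles $n\geq 2g-1$ directly from \textbf{P5} together with $c_1(\sO_{\P(\sE_n)}(1))=(\iota_{n-1,n})_*(1)$, and for $n<2g-1$ it factors $\psi_n=\iota_{n,2g-1}^*\circ\psi_{2g-1}$ and computes the kernel by the identical chain of equivalences (injectivity of $(\iota_{n,2g-1})_*$ from \lemref{lem:inj}, the projection formula, and $(\iota_{n,2g-1})_*(1)=\zeta_{2g-1}^{\,2g-1-n}$), the only cosmetic difference being that you rederive the compatibility $\iota_{m,n}^*\zeta_n=\zeta_m$ by repeating the Theorem~A computation in $\CH^*$ where the paper instead cites \cite[Lemma~9]{Co}. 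The colon subtlety you flag is treated no more carefully in the paper, whose chain likewise terminates at the module colon $((\alpha)M:z^{2g-1-n})$ and silently identifies it with $N_n=((\alpha):z^{2g-1-n})M$, so nothing the paper supplies is missing from your argument (only note the trivial slip: the relevant identity is $\pi_{2g-1}\circ\iota_{n,2g-1}=\pi_n$, not $\pi_n\circ\iota_{n,2g-1}=\pi_{2g-1}$, and in your well-definedness display the term should read $\pi_n^*(x)$ rather than $\pi_n^*(\iota^*x)$).
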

 
 \begin{proof}
 By \lemref{lem:Ext*-1}, $G$ extends to an additive functor $F\colon  \sM_k^{0, \dim} \to \Ab$. 
 In particular, for all $n \geq m$, \lemref{lem:inj} yields that the push-forward map $(\iota_{m,n})_*\colon  G(C(m)) \inj G(C(n))$ is injective and \lemref{lem:surj} implies that the pull-back map $\iota_{m,n}^*\colon  G(C(n)) \to G(C(m))$ is surjective. Let 
   $v_j = (\iota_{j-1, j})_* (1) \in \CH^*(C(j))$ for all $j \geq 1$, and $v_0 = 0$. 
   By \cite[Lemma~8]{Co}, we have 
      \begin{equation}\label{eqn:thm-B*-1.5}
   (\iota_{m, n})_* (1) = v_{n}^{n-m}.
   \end{equation}
It follows from \cite[Lemma~9]{Co} and \cite{Mat} that for all $n\geq 2g-1$, $m \leq n$ and $y \in G(C(n))$, we have 
\begin{equation}\label{eqn:thm-B*-2}
\iota_{m, n}^* (c_1(\sO_{\P(\sE_n)}(1)) \cdot y) = v_m \cdot \iota_{m, n}^*(y).
\end{equation} 
 Indeed, by \cite{Mat}, it follows that for all $n \geq 2g-1$, we have $c_1( \sO_{\P(\sE_n)}(1)) = (\iota_{n-1, n})_* (1)$. The claim then follows from \cite[Lemma~9]{Co} because $\iota_{m,n}^*\colon G(C(n)) \to G(C(m))$ is a homomorphism of $\CH^*(C(n))$-modules.

If $n\geq 2g-1$, then the theorem follows from the projective bundle formula for $\sE_n$. Indeed, if $n \geq 2g-1$, then ${\rm rank}(\sE_n) = n-g+1$ and $c_i(\sE_n) = u_i$ for all $0 \leq i \leq n- g + 1$. The short exact sequence \eqref{eqn:thm-B*-1} now follows from \eqref{eqn:P-B-F-CH-M*-1} because $c_1( \sO_{\P(\sE_n)}(1)) = (\iota_{n-1, n})_* (1)$.

If $1\leq n < 2g-1$, then we define a  homomorphism $\psi_n\colon  G(J(C))[z] \to G(C(n))$ such that the following diagram commutes. 
\begin{equation}\label{eqn:thm-B*-4}
\xymatrix@C3pc{
G(J(C))[z] \ar@{->>}[r]^{\psi_{2g-1}} \ar[rd]_{\psi_n}&G(C(2g-1)) \ar@{->>}[d]^{\iota_{n, 2g-1}^*} \\
& G(C(n)).}
\end{equation}
Observe that $\psi_{2g-1}$ is surjective by the projective bundle formula for $\sE_{2g-1}$ and the right vertical arrow ${\iota_{n, 2g-1}^*}$ is surjective by \lemref{lem:surj}. In particular, $\psi_n$ is surjective. For $x \in G(J(C))$ and $i \geq 0$, the commutative  diagram \eqref{eqn:thm-B*-4} along with  \eqref{eqn:thm-B*-2} yields that 
$\psi_n(x z^i) = {\iota_{n, 2g-1}^*} (\psi_{2g-1} (x z^i)) = {\iota_{n, 2g-1}^*} (\zeta(\sE_{2g-1})^i \cdot \pi_{2g-1}^* (x))  = ((\iota_{n-1, n})_* (1))^i \cdot \pi_n^*(x)  \in G(C(n))$. In short, we have
\begin{equation}\label{eqn:thm-B*-5}
\psi_n(x z^i) = v_n^i \cdot \pi_n^*(x). 
\end{equation}
 It now suffices to  show that $N_n = \ker(\psi_n)$. Let $a = \sum_j x_j z^j  \in G(J(C))[z]$. 
 Then 
 \begin{eqnarray*}
  0 = \psi_n(a) &\iff& 0 =  {\iota_{n, 2g-1}^*} \circ \psi_{2g-1} (a)\\
  & \iff^1 &  0 =  (\iota_{n, 2g-1})_* (1 \cdot \iota_{n, 2g-1}^* ( \psi_{2g-1} (a)))\\
    & \iff^2 & 0 = (\iota_{n, 2g-1})_*(1) \cdot \psi_{2g-1}(a)  \\
      & \iff^3 & 0 = v_{2g-1}^{2g-1-n} \cdot \psi_{2g-1}(a) \\
    & \iff ^4 & 0 = \psi_{2g-1}(a \cdot z^{2g-1-n})\\
    & \iff & a \cdot z^{2g-1-n} \in \ker(\psi_{2g-1}) = N_{2g-1} = (\alpha)M\\
    & \iff & a \in ((\alpha)M: z^{2g-1-n}),
 \end{eqnarray*}
 where $\iff^1$ follows  from \lemref{lem:inj}, $\iff^2$ follows from the projection formula \eqref{eqn:Assum-P-F}, 
 $\iff^3$ follows from \eqref{eqn:thm-B*-1.5} and $\iff^4$ follows from \eqref{eqn:thm-B*-4}. This completes the proof for $n \geq 1$.  Note that $n=0$ case can be proven on the similar lines with the convention ${C(-1)} = \emptyset$ and $(\iota_{-1, 0})_* = 0$.
 \end{proof}
 
 \vskip .2cm

 \section{Applications}\label{sec:App}
 
 In this section, we prove Theorems~\ref{thm:WCT-Structure}--\ref{thm:K-thy-Structure} 
as applications of  Theorems~\ref{thm:thm-A} and~\ref{thm:thm-B}. We recall the following notations 
from Sections~\ref{sec:Symm-P-Corr-On-Symm} and~\ref{sec:Symm-P-Corr-J}.
 
 Let $C$ be a smooth projective curve over an algebraically closed field $k$ and let $g$ be the genus of $C$. Let $J(C)$ denote the Jacobian of $C$. 
 Let $p \in C$ be a closed point and let $\pi_n\colon C(n) \to J(C)$ denote the map defined by the point $p$.
 For each $n \geq 2g-1$, let $\sE_n \to J(C)$  be the vector bundle over $J(C)$ such that $\P(\sE_n) \cong C(n)$. For $m\leq n$, let $\iota_{m, n}\colon C(m) \inj C(n)$ be the closed embedding defined by the closed point $p$.   
 
 \vskip .2cm
 
 \subsection{Classical Weil cohomology theories}\label{sec:App-WCT}
 
 Let $k$ be an algebraically closed field and let $K$ be a field of characteristic zero. Let $H^*\colon  \ProjSm_k \to {\bf GrAlg}_K$ be a classical Weil cohomology theory. For the definition and properties of a Weil cohomology theory, the reader can refer to \cite{Kle68}. Recall that for $X \in \ProjSm_k$, there exists a cycle class map ${\rm cl}: \CH^*(X) \to H^{2*}(X)$ which is a homomorphism of graded rings.

 \begin{prop} \label{cor:inj-surj-WCT}
 Let $H^*\colon  \ProjSm_k \to {\bf GrAlg}_K$ be a classical Weil cohomology theory as above. Then  the push-forward map 
$(\iota_{m, n})_*\colon H^*(C(m)) \to H^*(C(n))[-2(n-m)]$ is injective and the pull-back map 
$(\iota_{m, n})^*\colon H^*(C(n)) \to H^*(C(m))$ is surjective for all $n \geq m$.
 \end{prop}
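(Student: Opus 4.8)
The plan is to derive \propref{cor:inj-surj-WCT} directly from \lemref{lem:inj} and \lemref{lem:surj}, whose sole hypothesis is that the presheaf in question extends to an additive functor $F\colon \sM_k(\Lambda)^{0,\dim}\to\Ab$. So the only real content is to check that a classical Weil cohomology theory $H^*$ admits such an extension; everything after that is formal.

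First I would verify that $H^*$, composed with the forgetful functor ${\bf GrAlg}_K\to\Ab$, satisfies conditions {\bf P1}--{\bf P4} of \secref{sec:Mot-Func}. Indeed: Poincar\'e duality supplies Gysin push-forwards $f_*$ along morphisms of smooth projective $k$-schemes, giving {\bf P1}; the cycle class map $\cl\colon \CH^*(-)\to H^{2*}(-)$ together with the cup product makes each $H^*(X)$ a $\CH^*(X)$-module for which $f^*$ is $\CH^*(X)$-linear (it is even a ring map) and $f_*$ is $\CH^*(X)$-linear by the projection formula, giving {\bf P3}; the projection formula itself is {\bf P4}; and {\bf P2} is the compatibility of $\cl$ and of the Gysin maps with flat base change. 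All of these are among the standard axioms and their immediate consequences for a classical Weil cohomology theory (\cite{Kle68}); note also that $H^*$ is automatically additive on morphism sets since it is $K$-linear there. \lemref{lem:Ext*-1} then produces an additive functor $F\colon \sM_k^{0,\dim}\to\Ab$ with $F((X,1,0))=H^*(X)=F((X,1,\dim X))$, $F(\Gamma_f)=f^*$ and $F(\Gamma_f^{\op})=f_*$. (Alternatively, one may simply invoke the classical fact that a Weil cohomology theory is functorial for Chow correspondences, which gives the extension at once.)

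With $F$ available, \lemref{lem:inj} applied to $F$ shows that $F(\Gamma_{\iota_{m,n}}^{\op})$ is injective and \lemref{lem:surj} shows that $F(\Gamma_{\iota_{m,n}})$ is surjective; unwinding the identifications of \secref{sec:Mot-Func} these are precisely the Gysin map $(\iota_{m,n})_*\colon H^*(C(m))\to H^*(C(n))$ (which raises cohomological degree by $2(n-m)=2\,\codim(C(m)\subset C(n))$, hence is injective with the grading shift stated in the proposition) and the pull-back $(\iota_{m,n})^*\colon H^*(C(n))\to H^*(C(m))$. The cases $m=0$ are already handled inside \lemref{lem:inj} and \lemref{lem:surj} via $C(0)=\Spec k$ and the identity factorization $C(0)\hookrightarrow C(n)\to\Spec k$, so no separate argument is needed.

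I expect the one genuinely technical point to be condition {\bf P2}, the flat base change identity $g^*f_*=f'_*g'^*$ for Gysin maps: verifying it requires combining compatibility of $\cl$ with flat pull-back, the projection formula, and Poincar\'e duality (equivalently, reducing the Gysin map to a cup product with a cycle class and transporting it along the base change). This is standard but is where one must be careful; everything else in the argument is a matter of matching definitions.
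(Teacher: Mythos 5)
Your proof is correct and follows essentially the same route as the paper: establish that $H^*$ extends to an additive functor on (the relevant subcategory of) Chow motives and then quote \lemref{lem:inj} and \lemref{lem:surj}. The only cosmetic difference is that the paper takes your parenthetical alternative — it simply cites \cite[\S~1.3]{Kle68} for the extension $F\colon\sM_k\to{\bf GrVec}_K$ with $F((X,1,m))\cong H^*(X)[-2m]$, rather than re-verifying {\bf P1}--{\bf P4} and invoking \lemref{lem:Ext*-1}, so the flat base-change point you flag never needs to be checked by hand.
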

 \begin{proof}
 It follows from \cite[\S~1.3]{Kle68} that  the Weil cohomology $H^*$ extends to an additive functor 
 $F\colon  \sM_k \to {\bf GrVec}_K$ such that $F((X, 1, m)) = H^*(X)\otimes_K (H^2(\P^1)[-2])^m$. Fixing an isomorphism $K \xrightarrow{\cong} H^2(\P^1)$, we get an isomorphism 
 $F((X, 1, m)) \cong H^*(X)[-2m]$. The proposition  now follows from Lemmas \ref{lem:inj} and \ref{lem:surj} (see \remref{rem:Gr-Fun}).
 \end{proof}
 
 Note that we have the projective bundle formula \eqref{eqn:PBF-C-M} in $\sM_k$. We next prove that this formula with values in abelian groups implies the projective bundle formula  \eqref{eqn:P-B-F} with values in rings (and in this case, with values in graded $K$-algebras).

 \begin{lem}\label{lem:PBF-WCT-Ring}
  Let $H^*$ be a classical Weil cohomology theory. Then $H^*\colon  \ProjSm_k \to {\bf GrAlg}_K$ satisfies  
 the projective bundle formula {\bf PBF} of \secref{sec:thm-A}. Moreover, for a vector bundle $\sE\to X$ and  $i\geq 0$, the element $c_i(\sE, H^*) \in H^*(X)$ appearing in \eqref{eqn:P-B-F} is the image of the $i$-th Chern class $c_i(\sE) \in \CH^*(X)$, and $\zeta(\sE, H^*) = c_1(\sO_{\P(\sE)})$. 
 \end{lem}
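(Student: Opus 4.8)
The statement has two parts: (1) that the graded $K$-algebra valued functor $H^*$ satisfies the ring-level projective bundle formula {\bf PBF}, and (2) the identification of the Chern classes $c_i(\sE,H^*)$ with $\mathrm{cl}(c_i(\sE))$ and $\zeta(\sE,H^*)$ with $c_1(\sO_{\P(\sE)})$. The natural approach is to bootstrap from the motivic projective bundle formula \eqref{eqn:PBF-C-M} and the known cohomological projective bundle formula for Weil cohomology.

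First I would recall that for a Weil cohomology theory the projective bundle formula is part of the axioms (or an immediate consequence of them, via \cite{Kle68}): for $\pi\colon\P(\sE)\to X$ with $\sE$ of rank $r+1$, the map
\begin{equation*}
\bigoplus_{i=0}^{r} H^{*-2i}(X)\xrightarrow{\ \cong\ } H^*(\P(\sE)),\qquad (x_i)_i\mapsto \sum_i c_1(\sO_{\P(\sE)}(1))^i\cdot\pi^*(x_i)
\end{equation*}
is an isomorphism of $H^*(X)$-modules, with $c_1$ taken via the cycle class map. This is exactly the $F$-image of \eqref{eqn:PBF-C-M} under the extension $F\colon\sM_k\to{\bf GrVec}_K$ used in \propref{cor:inj-surj-WCT}, after fixing $K\xrightarrow{\cong}H^2(\P^1)$; so the module isomorphism is free. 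The content of {\bf PBF} beyond this is that the isomorphism is $H^*(X)$-\emph{algebra}, i.e.\ that $\zeta:=c_1(\sO_{\P(\sE)}(1))\in H^*(\P(\sE))$ satisfies the monic degree-$(r+1)$ relation $\zeta^{r+1}+c_1(\sE,H^*)\zeta^r+\cdots+c_{r+1}(\sE,H^*)=0$ with $c_i(\sE,H^*)\in H^{2i}(X)$, and that the quotient $H^*(X)[z]/\langle z^{r+1}+\cdots\rangle\to H^*(\P(\sE))$ sending $z\mapsto\zeta$ is an isomorphism of rings. Since $\{1,\zeta,\dots,\zeta^r\}$ is an $H^*(X)$-basis of $H^*(\P(\sE))$, the element $\zeta^{r+1}$ is uniquely a combination $-\sum_{i=1}^{r+1}c_i(\sE,H^*)\zeta^{r+1-i}$, which \emph{defines} the classes $c_i(\sE,H^*)$; the resulting surjection from the monic quotient is then an isomorphism by comparing ranks. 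This gives {\bf PBF}.

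For the identification of the $c_i(\sE,H^*)$, I would use the compatibility of the cycle class map with pullback, pushforward, and products. In $\CH^*$ the Grothendieck relation $\sum_{i=0}^{r+1}(-1)^i\pi^*c_i(\sE)\cdot\zeta_{\CH}^{r+1-i}=0$ holds with $\zeta_{\CH}=c_1(\sO_{\P(\sE)}(1))\in\CH^*(\P(\sE))$ (sign conventions aside; here $c_i(\sE,G)$ in {\bf PBF} is normalized so $z^{r+1}+c_1z^r+\cdots$, so one matches signs accordingly). Applying $\mathrm{cl}$, which is a graded ring homomorphism commuting with $\pi^*$ and sending $\zeta_{\CH}$ to $\zeta$, gives the same monic relation for $\zeta$ in $H^*(\P(\sE))$ with coefficients $\mathrm{cl}(c_i(\sE))$. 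By the uniqueness of the coefficients in the monic relation (the basis argument above), $c_i(\sE,H^*)=\mathrm{cl}(c_i(\sE))$, and $\zeta(\sE,H^*)=\zeta=c_1(\sO_{\P(\sE)}(1))$ by construction.

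The one point requiring care — and the main (minor) obstacle — is matching the sign/normalization conventions between the motivic projective bundle formula \eqref{eqn:PBF-C-M}, the Grothendieck relation in $\CH^*$ used to define Chern classes, and the form $z^{r+1}+c_1(\sE,G)z^r+\cdots$ in {\bf PBF} \eqref{eqn:P-B-F}; in \eqref{eqn:PBF-C-M} the generators $c_i=c_1(\pr_2^*\sO(1))^i\cap\Gamma_\pi$ are the $\zeta^i$ themselves, so one must check that the resulting ``$c_i(\sE,H^*)$'' are indeed the honest Chern classes and not, say, their images under a sign or under the dual bundle. This is entirely bookkeeping and is settled by testing on a trivial bundle or a line bundle, where $\P(\sE)=X\times\P^r$ and the relation reduces to $\zeta^{r+1}=0$ (for $\sE$ trivial) forcing all $c_i(\sE,H^*)=0=\mathrm{cl}(c_i)$. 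No further difficulty is expected, since everything follows from the axioms of a Weil cohomology theory together with the already-established extension to $\sM_k$.
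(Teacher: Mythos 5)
Your proposal is correct and follows essentially the same route as the paper: both bootstrap the ring-level {\bf PBF} from the $H^*(X)$-module decomposition given by the motivic formula \eqref{eqn:PBF-C-M} under the extension $F$, transport the Grothendieck relation from $\CH^*(\P(\sE))$ via the cycle class map to get the monic relation with coefficients ${\rm cl}(c_i(\sE))$, and pin down the kernel by a monic-polynomial argument. The only cosmetic difference is that the paper identifies $\ker(\psi)=(\beta')H^*(X)[z]$ by an explicit division-with-remainder induction, whereas you invoke freeness of the monic quotient and the basis $\{1,\zeta,\dots,\zeta^r\}$ to get uniqueness of the coefficients --- the same computation packaged slightly differently.
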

 
 \begin{proof}
  Let $X$ be a smooth projective scheme over $k$ and let $\pi\colon  \sE \to X$ be a vector bundle of rank $e+1$. The projective bundle formula \eqref{eqn:PBF-C-M} then implies that 
 we have a surjective map  $\psi\colon   H^{*}(X)[z] \surj  H^*(\P(\sE)) $ defined by 
  $\psi(\sum_j a_j z^j) = \sum_j \eta^j \cdot \pi^*(a_j) $, where $\eta = c_1(\sO_{\P(\sE)}) \in H^2(\P(\sE))$. 
  Observe that $\psi$ is an $H^*(X)$-algebra homomorphism. Let $\beta^{\prime} =  \sum_{0 \leq i \leq e+1} c_i(\sE) z^{e+1-i} \in H^*(X)[z]$. Observe that  $c_i(\sE)$ and $\eta$ are the images of the 
  corresponding elements in $\CH^*(X)$ and $\CH^*(\P(\sE))$, respectively. 
 By the projective bundle formula for $\CH^*(-)$, it follows that $\psi(\beta^{\prime})=0$. 
  Since $c_{0}(\sE)= 1$, $\beta^{\prime}$ is a monic polynomial in $z$. 
  Let 
    $0 \neq \gamma = \sum_{j=0}^t x_j z^j \in  H^{*}(X)[z]$ such that $\psi(\gamma)= 0$. By 
    \eqref{eqn:PBF-C-M}, we have $t \geq e+1$. If $t = e+1$, then $\gamma- x_t \beta^{\prime} \in \sum_{i =0}^e H^*(X)z^i$ and   $\psi(\gamma- x_t \beta^{\prime}) =0$. By
    \eqref{eqn:PBF-C-M}, it follows that $\gamma- x_t \beta^{\prime} =0$, i.e., $\gamma \in (\beta^{\prime})H^*(X)[z]$. If $t> e+1$, then $\gamma- x_t \beta^{\prime} z^{t-e+1} \in \sum_{i =0}^{t-1} H^*(X)z^i$ and   $\psi(\gamma- x_t \beta^{\prime} z^{t-e+1}) =0$. By induction on $t\geq e+1$, we conclude that 
    $\ker(\psi) = (\beta^{\prime})H^*(X)[z]$. This proves \eqref{eqn:P-B-F} and completes the proof. 
 \end{proof}
 
 \subsection{Proof of \thmref{thm:WCT-Structure}}
 It follows from \cite[\S~1.2]{Kle68} that the functor $H^*$ satisfies the properties {\bf P1-P4} of \secref{sec:Mot-Func} and hence extends to an additive functor $F\colon  \sM_k^{0, \deg} \to \Ab$ such that $F((X, 1, 0))= H^*(X) = F((X, 1, \dim(X)))$. 
 Observe that $H^*$ actually extends to a $\Q$-linear functor 
  $F\colon  \sM_k(\Q) \to {\bf GrVec}_K$ such that 
 $F((X, 1, i)) = H^*(X)[-2i]$. Composing this with the forgetful functor  ${\bf GrVec}_K \to \Ab$, the resulting functor gives the desired extension  $F\colon  \sM_k \to \Ab$. In particular,  $F((X, 1, i)) =  H^*(X)$, as abelian groups and $H^*$ satisfies {\bf Extn}  \eqref{eqn:general*-1}.
  By \lemref{lem:PBF-WCT-Ring}, $H^*$ satisfies the projective bundle formula {\bf PBF} \eqref{eqn:P-B-F} and it has the projection formula  {\bf PF} \eqref{eqn:P-F} by  \cite[\S~1.2]{Kle68}.
 
In terms of the notations from \thmref{cor:thm-A},
by \eqref{eqn:Con-Chern-C} and \cite[Lemma~8]{Co}, it follows that for $n \geq 2g-1$, we have 
$\zeta(\sE_n, H^*)^i = (c_1(\sO(1)_{C(n)}))^i = ((\iota_{n-1, n})_* (1))^i = (\iota_{n-i, n})_* (1) \in H^*(C(n))$. 
By \eqref{eqn:Chern-C*-1} and \lemref{lem:PBF-WCT-Ring}, there exist $u_i \in H^*(J(C))$ such that $c_i(\sE_n, H^*) = u_i = {\rm cl}(c_i(\sE_n))$ for all $n\geq 2g-1$ and 
$0 \leq i \leq n- g + 1$, and $u_i = 0$ for all $i < 0$ and $i>g$. The theorem now follows from \thmref{cor:thm-A}. 
$\hfill \square$

\vskip .2cm

 \subsection{Higher Chow groups}\label{sec:App-HCG}

Given a smooth quasi-projective scheme $X$ over a field $k$ and $r \geq 0$,  Bloch \cite{Bl-86} defined the higher Chow groups $\CH^*(X, r)$ such that $\CH^*(X, 0)$ is the Chow ring. 
For the definition and other basic properties of these groups, the reader should refer to \cite{Bl-86} or \cite{Le-94}.

 \begin{lem}\label{lem:PBF-HCG-Ring}
  The functor  $\CH^*(-, \cdot)\colon  \ProjSm_k \to {\bf GrAlg}_K$ satisfies the projective bundle formula \eqref{eqn:P-B-F} such that for a vector bundle $\sE \to X$ and $i\geq 0$, 
  $c_i(\sE, \CH^*(-, \cdot)) = c_i(\sE) \in \CH^*(X, 0)$ and $\zeta(\sE, \CH^*(-,\cdot)) = c_1(\sO_{\P(\sE)}(1))$.
 \end{lem}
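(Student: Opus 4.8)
The plan is to reduce the projective bundle formula for higher Chow groups to the known projective bundle formula for these groups (due to Bloch), exactly as in the proof of \lemref{lem:PBF-WCT-Ring}. First I would recall that higher Chow groups satisfy the projective bundle formula as $\CH^*(X,\cdot)[z]$-modules: for $\pi\colon \P(\sE)\to X$ with $\sE$ of rank $e+1$, pulling back along $\pi$ and multiplying by powers of $\eta = c_1(\sO_{\P(\sE)}(1))$ gives a surjection $\psi\colon \CH^*(X,\cdot)[z]\surj \CH^*(\P(\sE),\cdot)$, and the $\CH^*(X,\cdot)$-module $\CH^*(\P(\sE),\cdot)$ is free on $1,\eta,\dots,\eta^e$. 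This is standard (see \cite{Bl-86} or the module-level statement of {\bf P5}). Note that $\psi$ is a homomorphism of bigraded $\CH^*(X,\cdot)$-algebras since cup product is compatible with pull-back and with the $\CH^*(X,0) = \CH^*(X)$-module structure.

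The key step is then to identify the kernel of $\psi$ as the ideal generated by $\beta' = \sum_{0\le i\le e+1} c_i(\sE)\, z^{e+1-i}$, where $c_i(\sE)\in \CH^i(X,0)$ are the ordinary Chern classes. That $\psi(\beta') = 0$ follows from the relation $\sum_i (-1)^i \pi^*c_i(\sE)\,\eta^{e+1-i} = 0$ in $\CH^*(\P(\sE),0)$ — i.e. the classical projective bundle relation in the Chow ring, which here holds after pulling back to the higher Chow groups via the $\CH^*(\P(\sE),0)$-module structure. Since $c_0(\sE) = 1$, the polynomial $\beta'$ is monic in $z$, so any $\gamma = \sum_{j=0}^t x_j z^j \in \CH^*(X,\cdot)[z]$ can be reduced modulo $(\beta')$ to a polynomial of degree $\le e$; if this reduced polynomial lay in $\ker\psi$ it would vanish by freeness of $\CH^*(\P(\sE),\cdot)$ over $1,\dots,\eta^e$. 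Hence $\ker\psi = (\beta')\,\CH^*(X,\cdot)[z]$, which is exactly \eqref{eqn:P-B-F} for $G = \CH^*(-,\cdot)$, with $c_i(\sE,\CH^*(-,\cdot)) = c_i(\sE)$ and $\zeta(\sE,\CH^*(-,\cdot)) = \eta = c_1(\sO_{\P(\sE)}(1))$. The bookkeeping with the extra simplicial grading $r$ is harmless: everything is compatible with the $\CH^*(X,0)$-module structure and the degree-$r$ piece is just a direct summand.

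The main obstacle — such as it is — is making sure the module-level projective bundle formula for higher Chow groups is genuinely available in the form needed, namely freeness of $\CH^*(\P(\sE),\cdot)$ over $\CH^*(X,\cdot)$ on the powers of $\eta$ together with compatibility of all structure maps; this is in \cite{Bl-86} but one should cite it carefully. Beyond that, the argument is a verbatim transcription of the proof of \lemref{lem:PBF-WCT-Ring}: replace $H^*$ by $\CH^*(-,\cdot)$ throughout, noting only that the grading is now bigraded (cohomological degree and simplicial index $r$) rather than singly graded, and that $z$ carries cohomological degree $1$ rather than $2$. I would write the proof simply by referring to that earlier argument and indicating the one change.
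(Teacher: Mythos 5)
Your proposal is correct and takes essentially the same route as the paper: invoke Bloch's projective bundle isomorphism to obtain the surjection $\psi\colon \CH^*(X,\cdot)[z]\surj \CH^*(\P(\sE),\cdot)$, observe that the classical Chow-ring relation kills the monic polynomial $\beta'$, and use division by $\beta'$ together with freeness of $\CH^*(\P(\sE),\cdot)$ on $1,\eta,\dots,\eta^e$ to identify $\ker\psi=(\beta')\CH^*(X,\cdot)[z]$, summing over the simplicial index $r$ at the end — exactly the adaptation of the argument for \lemref{lem:PBF-WCT-Ring} that the paper carries out. The only cosmetic discrepancy is the alternating sign you insert in the projective bundle relation; the paper's convention in \eqref{eqn:P-B-F} has no signs, so the relation used there is $\sum_i \pi^*c_i(\sE)\,\eta^{e+1-i}=0$.
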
 
 \begin{proof}
 Let $X$ be a smooth quasi-projective scheme over $k$. 
  For a vector bundle $\pi\colon  \sE \to X$ of rank $e+1$ and $p, r \geq 0$, \cite[Property (vi), p.269]{Bl-86} (or  \cite[Corollary~5.4]{Le-94}) yields an isomorphism 
  \begin{equation} \label{eqn:PBF-HCG}
  \psi\colon  \oplus_{i = 0}^{e} \CH^{p-i}(X, r) \xrightarrow{\cong} \CH^p(\P(\sE), r) 
  \end{equation}
  such that 
  $\psi(a_0, \dots, a_e) = \sum_i \pi^*(a_i) \eta^i$, where $\eta$ is the first Chern class of the canonical line bundle on $\P(\sE)$. 
    The projective bundle formula \eqref{eqn:PBF-HCG} then implies that 
 we have a surjective map  $\psi(r)\colon   \CH^{*}(X, r)[z] \surj  \CH^*(\P(\sE), r) $ defined by 
  $\psi(r)(\sum_j a_j z^j) = \sum_j \pi^*(a_j) \cdot \eta^j$. Observe that $\psi(r)$ is a $\CH^*(X)$-module homomorphism. Let $\beta =  \sum_{0 \leq i \leq e+1} c_i(\sE) z^{e+1-i} \in \CH^*(X)[z]$. By the projective bundle formula for $\CH^*(-)$, we know that $\psi(0)(\beta)=0$. Since $c_{0}(\sE)= 1$, $\beta$ is a monic polynomial in $z$. 
  Let 
    $0 \neq \gamma = \sum_{j=1}^t x_j z^j \in  \CH^{*}(X, r)[z]$ such that $\psi(r)(\gamma)= 0$. By 
    \eqref{eqn:PBF-HCG}, we have $t \geq e+1$. If $t = e+1$, then $\gamma- x_t \beta \in \sum_{i =0}^e \CH^*(X, r)z^i$ and   $\psi(r)(\gamma- x_t \beta) =0$. By
    \eqref{eqn:PBF-HCG}, it follows that $\gamma- x_t \beta =0$, i.e., $\gamma \in (\beta)\CH^*(X, r)[z]$. If $t> e+1$, then $\gamma- x_t \beta z^{t-e+1} \in \sum_{i =0}^{t-1} \CH^*(X, r)z^i$ and   $\psi(r)(\gamma- x_t \beta z^{t-e+1}) =0$. By induction on $t\geq e+1$, we conclude that 
    \begin{equation} \label{eqn:PBF-r-level}
    \ker(\psi(r)) = (\beta)\CH^*(X, r)[z].
    \end{equation} 
    Taking direct sum over $r\geq 0$, we obtain the short exact sequence 
    \[
    0 \to (\beta)\CH^*(X, \cdot)[z] \to \CH^*(X, \cdot)[z] \xrightarrow{\psi} \CH^*(\P(\sE), \cdot)\to 0.
    \]
    This proves the projective bundle formula \eqref{eqn:P-B-F} for $G= \CH^*(-, \cdot)$ and completes the proof of the lemma. 
 \end{proof}

We now generalise the results of \cite{Co} for the functor $\CH^*(-, \cdot)$.  The first part of the following proposition  was stated in \cite{K-I}.

\begin{prop} \label{cor:inj-surj-HCG}
The push-forward map 
$(\iota_{m, n})_*\colon \CH^*(C(m), r) \to \CH^*(C(n), r)$ is injective and the pull-back map 
$(\iota_{m, n})^*\colon \CH^*(C(n), r) \to \CH^*(C(m), r)$ is surjective 
for all $m \leq n$ and $r\geq 0$.
\end{prop}
\begin{proof}
It is well known that $\CH^*(-, r)\colon \Sm_k \to \Ab$ extends to an additive functor $\CH^*(-, r)\colon  \Co_k \to \Ab$. For example, with rational coefficients, it follows from \cite[Corollary~5.3]{Le-94}. The extension also follows from  \lemref{lem:Ext*-1} and \cite[p.268-269, Corollary~5.7]{Bl-86}. The proposition then follows from Lemmas \ref{lem:inj} and \ref{lem:surj}.
\end{proof}

For $X\in \ProjSm_k$, we shall consider the bi-graded ring 
\[
\CH^*(X, \cdot)[z] = \oplus_{(i, j) \in \Z_{\geq 0} \times \Z_{\geq 0}} \left( \oplus_{0\leq s \leq i} \CH^{s}(X, j) z^{i-s}        \right).
\]

\subsection{Proof of \thmref{thm:HCG-Structure}}
 By \cite[p.268-269, Corollary~5.7, Exercise~5.8(i)]{Bl-86} and  \lemref{lem:PBF-HCG-Ring}, it follows that the functor $\CH^*(-, \cdot)$ has the  properties {\bf PBF}, {\bf Extn} and {\bf PF} of \secref{sec:thm-A}.
 We now verify the remaining hypothesis of \thmref{cor:thm-A} for the functor $G(-) = \CH^*(-, \cdot)$.  In terms of the notations from \thmref{cor:thm-A},
by \eqref{eqn:Con-Chern-C} and \cite[Lemma~8]{Co}, it follows that 
$\zeta(\sE_n)^i = (c_1(\sO(1)_{C(n)}))^i = ((\iota_{n-1, n})_* (1))^i = (\iota_{n-i, n})_* (1) \in \CH^*(C(n), \cdot)$
for $n \geq 2g-1$. By \eqref{eqn:Chern-C*-1} and  \lemref{lem:PBF-HCG-Ring}, there exist $u_i \in \CH^*(J(C), \cdot)$ such that $c_i(\sE_n, \CH^*(-, \cdot)) = u_i = c_i(\sE_n)$ for all $n\geq 2g-1$
 and 
$0 \leq i \leq n- g + 1$, and $u_i = 0$ for all $i < 0$ and $i>g$. The theorem now follows from \thmref{cor:thm-A}. 
 $\hfill \square$

\vskip .2cm

\subsection{Additive higher Chow groups}\label{sec:App-AHCG}

For a smooth scheme $X$ and  $r , m \geq 0$, we let $\TCH^*(X, r; m)$ denote the additive higher 
chow group of $X$. For the definition and other basic properties of the additive higher Chow groups, the reader should refer to \cite[\S~2]{Park} and \cite[\S~3]{KLevine}.
Recall that by \cite[Corollary~5.4]{KLevine}, the assignment $X \mapsto \TCH^*(X, r; m)$ defines a functor $\TCH^*(-, r ; m)\colon  \Sm_k^{\op} \to \Ab$. Moreover, by \cite[\S~3.3]{KLevine}, 
it follows that $\TCH^*(-, r; m)$ has projective push-forwards.


%
%
%

\begin{prop} \label{cor:inj-surj-AHCG}
The push-forward map 
$(\iota_{l, n})_*\colon \TCH^*(C(l), r ;m) \to \TCH^*(C(n), r;m)$ is injective and the pull-back map
$
(\iota_{l, n})^*\colon \TCH^*(C(n), r;m) \to \TCH^*(C(l), r; m )
$ is surjective for all $l \leq n$ and $r, m\geq 0$.
\end{prop}

\begin{proof}
By \cite[Theorem~5.3]{KLevine}, it follows that $\TCH^*(-, r;m)$ defines an additive  functor $\TCH^*(-, r ;m)\colon  \sM_k^{0, \dim} \to \Ab$. The result now follows from Lemmas \ref{lem:inj} and \ref{lem:surj}. 
\end{proof}

\subsection{Proof of \thmref{thm:AHCG-Structure}}

By \cite[\S~3.3, Corollary~5.4, Lemma~3.8, Theorem~4.10]{KLevine}, it follows that the functor $G = \TCH^*(- , r , m)$ satisfies {\bf P1-P4} of \secref{sec:Mot-Func}. Moreover, the property {\bf P5} for $G$ follows from \cite[Theorem~5.6]{KLevine} exactly as we proved \eqref{eqn:PBF-r-level} in \lemref{lem:PBF-HCG-Ring}. The theorem then follows from \thmref{thm:thm-B}.
$ \hfill \square$

\begin{remk} \label{rem:HCG-M}
This remark is about the higher Chow groups with modulus. Binda and Saito \cite{BS}  defined
the higher Chow groups with modulus which generalises 
the additive higher Chow groups and
the  higher Chow groups of Bloch. Let $C$ be a smooth projective curve over $k$ and let $D$ be a divisor on $J(C)$. 
If we work with the categories $\Sm_{J(C)}$, $\sM_{J(C)}$ and $\Co_{J(C)}$, then we can obtain results similar to \thmref{thm:AHCG-Structure} for the higher 
Chow groups with modulus $\CH^*(J(C)|D, r)$ and $\CH^*(C(n)|\pi^*(D), r)$.  
This follows from \cite[Theorems~4.3,~4.5 and~4.6]{KP} exactly as we proved \thmref{thm:AHCG-Structure}. The observation we need to make everything work is that all morphisms and correspondences are over $J(C)$. 
\end{remk}

\vskip .2cm

 \subsection{Rational $K$-theory} \label{sec:App-K-thy}
 For $X \in \ProjSm_k$, let $K(X)$ denote the algebraic $K$-theory spectrum and let $K_r(X)$ denote the $r$-th stable homotopy group of $K(X)$. Grothendieck proved that the Chern character defines an isomorphism 
  ${\rm ch}\colon  K_0(X)_{\Q} \xrightarrow{\cong} \CH^*(X)_{\Q}$. Since each $K_n(X)$ is a $K_0(X)$-module, it follows that $K_r(X)_{\Q}$ is a $\CH^*(X)_{\Q}$-module for all $r\geq 0$. 
  
%
%
%
%
%
%
%
%
 
\begin{prop} \label{cor:inj-surj-K-thy}
The push-forward map 
$(\iota_{m, n})_*\colon K_r(C(m))_{\Q} \to K_r(C(n))_{\Q}$ is injective and the pull-back map 
$(\iota_{m, n})^*\colon K_r(C(n))_{\Q} \to K_r(C(m))_{\Q}$ is surjective for all
$m \leq n$ and $r\geq 0$.  
\end{prop}

\begin{proof} It follows from \cite[3.14, 3.16.4, Propositions~3.17 and~3.18]{TT} that the functor $G = K_r(-)_{\Q}$ satisfies {\bf P1-P4} of \secref{sec:Mot-Func}. \lemref{lem:Ext*-1} then yields that  $K_r(-)_{\Q}$ defines an additive  functor $K_r(-)_{\Q}\colon  \sM_k^{0, \dim} \to \Ab$. The result now follows from Lemmas \ref{lem:inj} and \ref{lem:surj}. 
\end{proof}

\subsection{Proof of \thmref{thm:K-thy-Structure}}
 As before,  it follows from \cite[3.14, 3.16.4, Propositions~3.17 and~3.18]{TT} that $K_r(-)_{\Q}$ satisfies  {\bf P1-P4} of \secref{sec:Mot-Func}.
Moreover, the property {\bf P5} follows from \cite[Theorem~7.3]{TT} because under the isomorphism $K_0(X)_{\Q} \xrightarrow{\cong} \CH^*(X)_{\Q}$, the class of  $\sO_{\P(\sE)}(1)$ maps to $c_1(\sO_{\P(\sE)}(1))$. The theorem now follows from \thmref{thm:thm-B}. 
$\hfill \square$

\begin{remk} 
Observe that Bloch \cite[Theorem~9.1]{Bl-86} proved that for a smooth quasi-projective scheme, the character map ${\rm ch_X}\colon K_n(X)_{\Q} \to \CH^*(X, n)_{\Q}$ is an isomorphism. Indeed, the theorem says that ${\rm Td}(X) {\rm ch}_X$ is an isomorphism but the Todd class ${\rm Td}(X)$ belongs to $\CH^*(X)^{\times}$. Since  the isomorphism ${\rm ch}_X$ commutes with the pull-backs, \thmref{thm:K-thy-Structure} follows from \thmref{thm:HCG-Structure}. Here we are able to avoid such an isomorphism and this helps us in \remref{rem:rel-K-thy}.
\end{remk}

 \begin{remk}\label{rem:rel-K-thy}
 Since the relative $K$-theory also satisfies  properties of {\bf P1-P5}, we have variants of \propref{cor:inj-surj-K-thy} and \thmref{thm:K-thy-Structure} for the relative $K$-theory (with rational coefficients) as well.  One way to generalise these results is that we can fix a pair $(X, Y)$ such that $Y \inj X$ is a closed subscheme of $X$ and study the functor $K_r(-\times X, -\times Y) \colon  \SmProj_k \to \Ab$. This is similar to the case of the additive higher Chow groups, where $(X, Y) = (\A^1, (m+1)\{0\})$. The other way is to fix a closed subscheme of $J(C)$ and work with $\SmProj_{J(C)}$, see \remref{rem:HCG-M}. This case is similar to the case of the higher Chow groups with modulus. 
 \end{remk}

 \vskip .2cm

\noindent\emph{Acknowledgments.}
The author is supported by the SFB 1085 \emph{Higher Invariants} (Universit\"at Regensburg). 
The author would like to thank Dr. Anand Sawant for his useful suggestions on the structure of the 
paper which led to an improved exposition. The author also thanks the referees for reading the manuscript thoroughly and suggesting 
valuable improvements.

\end{document}